\documentclass{article}


\usepackage{amsmath,amsfonts,bm}









\def\eqref#1{equation~(\ref{#1})}
\def\Eqref#1{Equation~(\ref{#1})}








\def\1{\bm{1}}




\def\rva{{\mathbf{a}}}
\def\rvb{{\mathbf{b}}}

\def\rvg{{\mathbf{g}}}

\def\rvm{{\mathbf{m}}}

\def\rvv{{\mathbf{v}}}

\def\erva{{\textnormal{a}}}
\def\ervb{{\textnormal{b}}}


\def\rmG{{\mathbf{G}}}



\def\vtheta{{\bm{\theta}}}



\DeclareMathAlphabet{\mathsfit}{\encodingdefault}{\sfdefault}{m}{sl}
\SetMathAlphabet{\mathsfit}{bold}{\encodingdefault}{\sfdefault}{bx}{n}


\def\gX{{\mathcal{X}}}

\def\sA{{\mathbb{A}}}




%

\newcommand*\circled[1]{\tikz[baseline=(char.base)]{
            \node[shape=circle,draw,inner sep=1pt] (char) {\tiny{#1}};}}





\newcommand{\R}{\mathbb{R}}



\DeclareMathOperator*{\argmin}{arg\,min}


\makeatletter
\def\thickhline{%
  \noalign{\ifnum0=`}\fi\hrule \@height \thickarrayrulewidth \futurelet
   \reserved@a\@xthickhline}
\def\@xthickhline{\ifx\reserved@a\thickhline
               \vskip\doublerulesep
               \vskip-\thickarrayrulewidth
             \fi
      \ifnum0=`{\fi}}
\makeatother

\newlength{\thickarrayrulewidth}
\setlength{\thickarrayrulewidth}{3\arrayrulewidth}

\usepackage{arxiv}
\usepackage{amsmath}
\usepackage{amssymb}
\usepackage[utf8]{inputenc} 
\usepackage[T1]{fontenc}    
\usepackage{hyperref}       
\usepackage{url}            
\usepackage{booktabs}       
\usepackage{amsfonts}       
\usepackage{nicefrac}       
\usepackage{microtype}      
\usepackage{lipsum}
\usepackage{xspace}
\usepackage{breqn}
\usepackage{algorithm}
\usepackage{algorithmic}
\usepackage{graphicx}
\usepackage{amsthm}
\usepackage{mathrsfs}
\usepackage{mathtools}
\usepackage{tikz}
\usepackage{multicol}
\usepackage{multirow}
\usepackage{paralist, tabularx} 

\newtheorem{lemma}{Lemma}[section]
\newtheorem{theorem}[lemma]{Theorem}
\newtheorem{corollary}[lemma]{Corollary}

\newtheorem*{remark*}{Remark}
\newtheorem{assumption}{Assumption}

\makeatletter
\newcommand\mathcircled[1]{%
  \mathpalette\@mathcircled{#1}%
}
\newcommand\@mathcircled[2]{%
  \tikz[baseline=(math.base)] \node[draw,circle,inner sep=1pt] (math) {$\m@th#1#2$};%
}

\newcommand{\method}{ACMo\xspace}

\title{\method: Angle-Calibrated Moment Methods for Stochastic Optimization}

\author{
  Xunpeng Huang\\
  Bytedance AI Lab\\
  \texttt{huangxunpeng@bytedance.com}
    \And
  Runxin Xu\\
  Peking University\\
  \texttt{runxinxu@gmail.com}
    \And
  Hao Zhou\\
  Bytedance AI Lab\\
  \texttt{zhouhao.nlp@bytedance.com}
    \And
  Zhe Wang\\
  Ohio State University\\
  \texttt{wang.10982@osu.edu}
    \And
  Zhengyang Liu\\
  Beijing Institute of Technology\\
  \texttt{zhengyang@bit.edu.cn}
    \And
  Lei Li\thanks{Corresponding author}\\
  Bytedance AI Lab\\
  Beijing, China\\
  \texttt{lileilab@bytedance.com}
}

\begin{document}
\maketitle

\begin{abstract}
Due to its simplicity and outstanding ability to generalize, stochastic gradient descent (SGD) is still the
most widely used optimization method despite its slow convergence. 
Meanwhile, adaptive methods have attracted rising attention of optimization and machine learning communities, both for the leverage of life-long information and for the profound and fundamental mathematical theory.
Taking the best of both worlds is the most exciting and challenging question in the field of optimization for machine learning.
Along this line, we revisited existing adaptive gradient methods from a novel perspective, refreshing understanding of second moments. 
Our new perspective empowers us to attach the properties of second moments to the first moment iteration, and to propose a novel first moment optimizer, \emph{Angle-Calibrated Moment method} (\method). 
Our theoretical results show that \method is able to achieve the same convergence rate as mainstream adaptive methods. 
Furthermore, extensive experiments on CV and NLP tasks demonstrate that \method has a comparable convergence to SOTA Adam-type optimizers, and gains a better generalization performance in most cases.
\end{abstract}

\section{Introduction}
\label{sec:intro}
Deep neural network has been widely adopted in different applications because of its excellent performance, which always requires a huge amount of data for training.
Calculating the full gradient of data and performing the full gradient descent (GD) become computationally expensive. 
Therefore, stochastic gradient descent (SGD) has become very popular for training deep neural networks. 
Empirically, in each step of the training, SGD samples a mini-batch of data and applies gradient descent with the corresponding stochastic gradients computed on the mini-batch.

In practice however, the vanilla SGD does not always produce good results, in which case many SGD variants are proposed. 
Especially, relevant work has shown that incorporating momentum information into SGD can help with its optimization process. 
Specifically, by introducing the first moment, the SGD momentum can help the model escape from some saddle points, and therefore improve its generalization. 
Intuitively, vanilla SGD walks along the steepest path, whereas the added momentum makes the optimization process smoother and quicker, thus helping the model to barrel through narrow valleys.

Additionally, \textit{second moments} are used to adapt the learning rate of model parameters~\cite{Duchi2011}, performing smaller updates (i.e. low learning rates) for parameters associated with frequently occurring features and larger updates (i.e. high learning rates) for those associated with infrequent features. 
This always accelerates the optimization process towards the objective.
Moreover, Adam is proposed to utilize both of first and second moments for enjoying both of their benefits~\cite{Kingma2014}.
Currently, it is one of the most widely used methods for neural network training. 

Although Adam has achieved a great success, we argue that introducing extra second moments is not necessarily the best way to boost the optimization efficiency.
First, by designing special optimization problems, the solution of adaptive gradient methods with second moments may fall into a local minima of pool generalization~\cite{wilson2017marginal}.
Second, keeping a copy of second moments~(with the same size as the parameters) brings high memory overhead, leading to smaller mini-batch size for training.
This may adversely affect the performance of many applications that are sensitive to training batch size.
Given the above concerns, could we design an optimization approach that \emph{only uses first moments but enjoys the benefits of both first~(better generalization by escaping saddle points) and second~(fast convergence with adaptive learning) moments}?

In this paper, we offer an affirmative to the question by proposing an Angle-Calibrated Moment method~(\method) for stochastic optimization. \method takes a further step by explicitly requiring that the opposite direction of current descent be in acute angles with both the current gradient and the directions of historical mini-batch updating.
Given the angle constraint, \method is likely to ensure descents for all mini-batch losses, while guaranteeing sufficient descent of current mini batches.
Although \method has abandoned the second moment, on which many other methods rely, it still takes the advantage of fast convergence as those methods.
We summarize our contributions below
\begin{compactitem}
\item We propose \method, which is a new SGD variant. To the best of our knowledge, \method is the first approach 
without relying on second moments, but still has comparable convergence speed as compared with Adam-type methods.
\item We provide a novel view and an intuitive analysis to understand the effect of second moments, which may shed light on the following work in this field . 
\item We provide theoretical results for the gradient norm convergence of \method on the nonconvex settings, which illustrates that \method can offer the same convergence rate comparing with the Adam-type optimizers. Experimental results on different CV and NLP tasks show that, even without second moments, \method can display convergence speed that is on par with SOTA Adam-type optimizers, while obtains even better generalization in most cases. 
\end{compactitem}

\textbf{Notations.}
In the rest of this paper, we have parameters $\vtheta \in \R^d$ where $d$ denotes the dimension of parameters.
The $l_2$ norm of a given vector $\vtheta$ is expressed by $\|\vtheta\| = \sqrt{\sum_{i=1}^d \theta_i^2}$.
With slightly abuse of notation, we represent arithmetic symbols as element-wise operations for vectors, e.g., $\rva^2 = [\erva_1^2, \erva_2^2,\ldots]^T, \rva/\rvb=[\erva_1/\ervb_1, \erva_2/\ervb_2,\ldots]^T.$
We denote  $\lfloor x\rfloor$ as the greatest integer less than or equal to the real number $x$. Given any integers $x, y$, where $y>0$, we denote $x \pmod y$ as the remainder of the Euclidean division of $x$ by $y$.
In the finite-sum loss function, $f(\vtheta) = \frac{1}{n}\sum_{i=1}^n f_i(\vtheta)$, the number of instances and the loss of the $i$-th training data are represented as $n$ and $f_i(\vtheta)$, respectively.
Besides, we denote $f_{\sA}(\vtheta)$ when we feed a collection of samples, i.e., $f_\sA(\vtheta) := \frac{1}{\left|\sA\right|}\sum_{i\in \sA}f_i(\vtheta)$. For an optimization algorithm, if its update paradigm can be formulated as
\begin{equation}
    \label{eq:iteration_auxiliary_problem}
    \vtheta_{t+1} = \argmin_{\vtheta}\ \hat{f}_{\sA_t}(\vtheta, \vtheta_t),
\end{equation}
we denote eq.~\ref{eq:iteration_auxiliary_problem} as the iteration auxiliary problem (similar to \cite{Nesterov2006Cubic}).

\section{Related Work}
\label{sec:related}
In this section, we will introduce the development of the neural network optimizers.

\textbf{First Moment Optimizers} SGD-momentum and Nesterov accelerated gradient~\cite{nesterov2013introductory} are widely used in training large-scale neural networks, but because of the learning rate issue, their excellent generalization ability is brittle.

\textbf{Second Moment Optimizers} To accelerate the convergence, researchers began to focus on the design of adaptive gradient methods for a fast and simple optimizer. 
Adagrad~\cite{Duchi2011} introduced the second moment to obtain a self-adaptive learning rate, thus freeing researcher of the troubles of parameter tuning. 
The update rules of Adagrad can be formulated as $\vtheta_{t+1} = \vtheta_{t}-\alpha_{t}\cdot \rvg_t/\sqrt{\rvv_t}$,
where $\rvg_t$ denotes the stochastic gradient, $\rvv_t$ is the accumulation of gradient's second moments, i.e., $\rvv_t =\sum_{\tau=1}^t\rvg_\tau^2$, and $\alpha_t $ is the decreasing learning rate with $\alpha_t = {\Theta} (1/\sqrt{t})$.
Theoretically, Adagrad improved the convergence of regret from $O(\sqrt{d/T})$ to $O(1/\sqrt{T})$ for the convex objectives with sparse gradients. 
However, in practice, people realize that adaptive gradient of Adagrad, i.e., ${\rvg_t}/{\sqrt{\rvv_t}}$ goes to zero very quickly due to the fact that $\rvv_t$ accumulates to large number quickly as the algorithm proceeds, and they often require optimizers to have a lower memory cost for training a larger mini-batch. 
To make it through, RMSProp~\cite{Hinton2012} uses the exponential decay in the second moment to control the accumulation speed of second moment in Adagrad, and min-max squared graidient is introduced to implement Adagrad with a memory-efficient way~\cite{anil2019memory}.

\textbf{Adam-type Optimizers}
To take both the benefits from first and second moments, Adam was proposed to incorporate the momentum into RMSProp. 
The detailed procedure of Adam can be formulated as $\vtheta_{t+1} = \vtheta_{t}-\alpha_{t}\cdot \rvm_t/\sqrt{\rvv_t}$, where $\rvm_t$ is the exponential decay of momentum, i.e., $\rvm_t = \sum_{\tau=1}^t(1-\beta_1)\beta_1^{t-\tau}\rvg_\tau$, and $\rvv_t$ is the exponential decay of second moments, i.e., $\rvv_t =
\sum_{\tau=1}^t(1-\beta_2)\beta_2^{t-\tau}\rvg_\tau^2$.

The faster convergence, robust hyper-parameters and good performance on bunch of tasks make Adam become one of the most successful optimizers these years.
Adam was proved not to be convergent in certain convex cases, and Amsgrad was proposed to correct the direction of Adam~\cite{Reddi2019}.

Although convergence, the generalization ability of adaptive algorithms is worse than SGD-momentum in many tasks.
Thus, a lot of studies are proposed to improve the generalization performance of Adam-type methods by making some connections between Adam and SGD-momentum, e.g., ND-Adam~\cite{zhang2017normalized}, AdamW~\cite{Loshchilov2017}, SWATS~\cite{keskar2017improving}, Adabound~\cite{Luo2019}, PAdam~\cite{Chen2018}, etc.

All the aforementioned methods try to find  the connection between Adam and SGD-momentum, thus all these algorithms take the second moment adaptation as a grant. 
Different from previous work, we revisit the original idea of second moment adaptation, and propose an angle based algorithm without second moments.
Such intuition makes \method have a simpler update, a lower memory overhead, a good generalization performance, and a comparable convergence to SOTA Adam-type optimizers.

\section{Our Proposed \method}
\label{sec:method}
In this section, we propose a novel optimizer, the Angle-Calibrated Moment method (\method), for both fast convergence and ideal generalization \emph{with only first moments}. 

We first give some theoretical analysis, showing that incorporating second moments into 
optimization approximately equals to penalize the projection of the current descent direction on previous gradients.
Besides, such penalty helps facilitate the descent of the current batch loss while not harming the descent effects of previous iterations as much as possible~(section \ref{sec:3_1}).
After that, in section~\ref{sec:algorithm_detail}, we replace the projection penalty with the inner product penalty in iteration auxiliary problems, which partially preserve geometric properties of second moments (protecting effects of previous updates) to expect a fast convergence.
Besides, the new iteration auxiliary problem can be considered as a general form of SGD-momentum updates to expect a good generalization ability.
Finally, with sufficient descent constraints of the current batch loss, we propose \method whose update paradigm is 
\begin{equation}
    \vtheta_{t+1} = \vtheta_t - \alpha_t\left(\rvg_t + \beta_t \cdot \frac{\left\|\rvg_t\right\|}{\left\|\hat{\rvm}_{t-1}\right\|+\delta_t}\cdot \hat{\rvm}_{t-1}\right),
\end{equation} 
where $\rvg_t$ and $\rvm_t$ are mini-batch gradients and angle-calibrated moments at iteration $t$, respectively.

\subsection{Second Moments Work as Projection Penalty to Preserve Previous Descent}
\label{sec:3_1}
In this section, we revisit the iterations of existing adaptive algorithms from a novel point of view and denote the essential effect of second moments which is to penalize the projection of the current descent direction on previous gradients.
Such penalty is designed to decrease the cumulative loss by guaranteeing a descent on the current batch loss without increasing previous batch loss.

One may notice that almost all the adaptive methods utilize second moments to adjust the individual magnitude of their updates.
However, the efficient calculation of second moments is only proposed in Adagrad~\cite{Duchi2011}. 
Here is the original update of Adagrad:
\begin{equation}
\label{eq:Adagrad_original_update}
\vtheta_{t+1} = \vtheta_{t}-\alpha_{t}\rmG_t^{-1/2}\rvg_t,
\end{equation} 
where $\rmG_t = \sum_{\tau = 1}^{t} \rvg_\tau\rvg_\tau^T$, and $\rvg_\tau$ denotes the stochastic gradient calculated at iteration $\tau$.
The iteration auxiliary problem of Adagrad corresponding to eq.~\ref{eq:Adagrad_original_update} can be formulated  as
\begin{equation}
\label{eq:Adagrad_proximal_subproblem}
\begin{split}
    \vtheta_{t+1} = \mathop{\argmin}\limits_{\vtheta} \overline{f}(\vtheta),\quad \overline{f}(\vtheta) \coloneqq \underbrace{\left(\vtheta-\vtheta_t\right)^T\rvg_t}_{T_1} + \underbrace{\frac{1}{2\alpha_t}\left(\vtheta-\vtheta_t\right)^T\rmG_t^{1/2}\left(\vtheta-\vtheta_t\right)}_{T_2}.
\end{split}
\end{equation}
To investigate the properties of second moments, we provide $\overline{f}(\vtheta)$ an upper bound, $\hat{f}_{\sA_t}(\vtheta)$, with the Young's inequality~\cite{young1912}, and minimize it with another iteration auxiliary problem formulated as
\begin{equation}
    \label{eq:Adagrad_objectiv_upperbound}
    \begin{split}
    \hat{\vtheta}_{t+1} = \mathop{\argmin} \hat{f}_{\sA_t}(\vtheta), \ \hat{f}_{\sA_t}(\vtheta)= \underbrace{\left(\vtheta-\vtheta_t\right)^T\rvg_t}_{T_1} +\underbrace{\frac{1}{2\alpha_t}\left\|\vtheta-\vtheta_t\right\|^2}_{T_3} + \underbrace{\frac{1}{8\alpha_t}\sum_{\tau=1}^{t}\left\|\left(\vtheta-\vtheta_t\right)^T\rvg_\tau\right\|^2}_{T_4}
    \end{split}
\end{equation}
where $\sA_t$ denotes the chosen mini-batch at iteration $t$.
Thus, the global minima $\hat{\vtheta}_{t+1}$ of eq.~\ref{eq:Adagrad_objectiv_upperbound} can also be considered as an approximate solution of eq.~\ref{eq:Adagrad_proximal_subproblem}.
Investigating $\hat{f}_{\sA_t}(\vtheta)$, we find $T_1+T_3$ is the second-order Taylor polynomial of $f_{\sA_t}(\vtheta)$ near $\vtheta_t$, which is completely the same with the objective function of SGD iteration auxiliary problem.
Besides, the term $T_4$ in $\hat{f}_{\sA_t}(\vtheta)$ can be considered as a penalty for the projections of current descent $\vtheta - \vtheta_t$ on the previous gradients $\rvg_\tau$. 
If we approximate $\rvg_\tau$ in $T_4$ with $\nabla f_{\sA_\tau}(\vtheta_t)$ due to smoothness assumption, and replace the projection regularization ($T_4$) to some constraints, we can obtain an approximate optimization problem with a hard margin formulated as
\begin{equation}
    \label{eq:acute_subproblem}
    \begin{split}
    \argmin\limits_{\vtheta} \  &\left(\vtheta-\vtheta_t\right)^T\rvg_t + \frac{1}{2\alpha_t}\left\|\vtheta-\vtheta_{t}\right\|^2 \\
    \text{s.t.}\ &\left(\vtheta-\vtheta_t\right)^T\nabla f_{\sA_\tau}(\vtheta_t)=0, \tau \le t.
    \end{split}
\end{equation}
The constraints in eq.~\ref{eq:acute_subproblem} denote the descent direction, i.e., $\vtheta_{t+1}-\vtheta_{t}$ is desire to be orthogonal to $\nabla f_{\sA_\tau}(\vtheta_t)$ for any $\tau\le t$, which plays a similar role to $T_4$ in eq.~\ref{eq:Adagrad_objectiv_upperbound}. In the following, we provide a explanation about such constraint optimization problem from a random shuffling perspective.

Before each epoch begins, the whole training dataset $\sA$ is usually randomly shuffled, and is partitioned into mini-batch of equal size $\left\{\sA_0, \sA_1,\ldots, \sA_{p-1}\right\}$.
Then the algorithm is fed with the samples in the fixed order, say, first $\sA_0$, then $\sA_1$, and so on.  
The whole procedure repeats after each iteration over the whole dataset.
Let $\nabla f_{\sA_t}(\vtheta_t)$ be the gradient calculated at iteration $t$ by using the sample in subset $\sA_t$, where $0 \leq t < p$.  
Note that the loss function is the average of all the samples, e.g., $\frac{1}{n}\sum_{i=1}^{n} f_i(\vtheta)$.
If we utilize $\nabla f_{\sA_t}(\vtheta_t)$ to directly update parameters like SGD, e.g., $\vtheta_{t+1} = \vtheta_{t} - \alpha \nabla f_{\sA_{t}}(\vtheta_t)$, the batch loss $\sum_{{i \in \sA_t}}  f_i(\vtheta)$ will decrease since it aligns with the opposite direction of its gradient, e.g., $ (\vtheta_{t+1}-\vtheta_{t})^T \nabla f_{\sA_{t}}(\vtheta_t) = - \|\nabla f_{\sA_{t}}(\vtheta_t)\|^2 <0$. 
However, for the loss corresponding to the sample not in $\sA_t$, e.g., $f_{\sA_\tau}(\vtheta),\ \tau\not=t$,  it is highly possible that  $(\vtheta_{t+1}-\vtheta_{t})^T \nabla f_{\sA_{\tau}}(\vtheta_t) >0$. 
In other words, only using $-\nabla f_{\sA_{t}}(\vtheta_t)$ as update direction will decrease the loss corresponding to $\sA_{t}$ but increase the loss except $\sA_{t}$.
Hence, with the orthogonal requirements, i.e., the constraints in eq.~\ref{eq:acute_subproblem}, one can consider that {\em Adagrad decreases the cumulative loss by guaranteeing a descent on the current batch loss while does not increase previous batch loss sharply.}

\subsection{Angle-Calibrated Moments Warrant Descents}
\label{sec:algorithm_detail}
In this section, we first enhance the iteration auxiliary problem eq.~\ref{eq:Adagrad_objectiv_upperbound}. Then, we make the opposite direction of current descent forms acute angles with both current mini-batch gradient and some moments to introduce our \method.

\begin{figure}[H]
    \centering
    \includegraphics[width=0.4\textwidth]{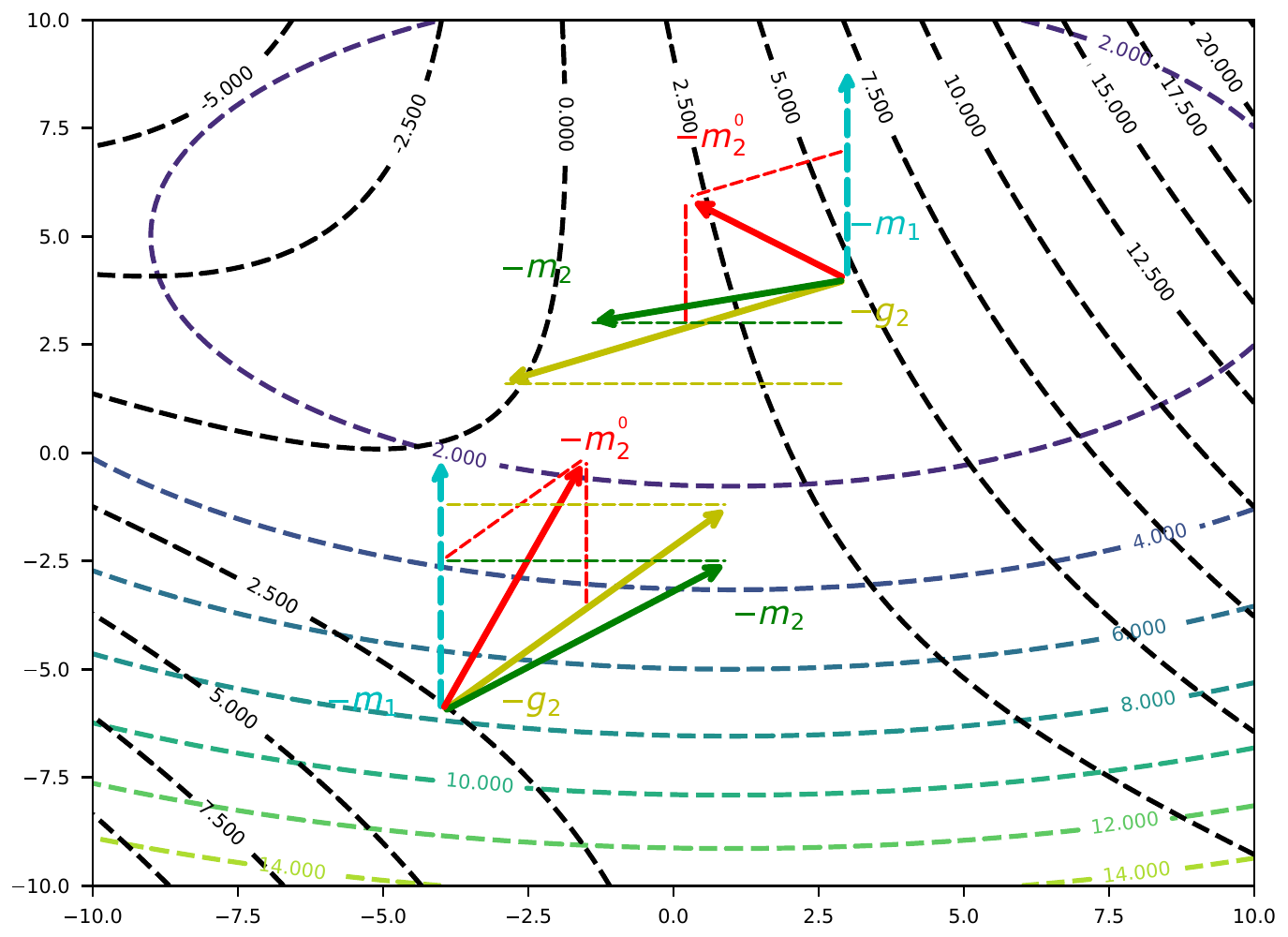}
    \vspace{-15pt}
    \caption{\scriptsize The cyan, yellow, green and red vectors respectively denote previous moments, current mini-batch (the objective corresponding to black dashes) gradients,  descent directions for iteration auxiliary problems with the projection or ACMo regularization.}
    \label{fig:acmo_intuition}
\end{figure}

\begin{algorithm}[H]
    \caption{\textbf{A}ngle-\textbf{C}alibrated \textbf{Mo}ment method.}
    \label{alg:acutum}
 \begin{algorithmic}[1]
    \STATE {\bfseries Input:} initial point $\vtheta_0\in \gX$; step size $\left\{\alpha_t\right\}$, momentum parameters $\left\{\beta_t\right\}$
    \STATE set $\hat{\rvm}_0$ = 0
    \FOR{$t = 1$ to $T$}
         \STATE $\rvg_t = \nabla f_{\sA_t}(\vtheta_t)$
         \STATE $\hat{\beta}_t = \beta_{t}\left\|\rvg_t\right\|\big/ \left(\left\|\hat{\rvm}_{t-1}\right\|+\delta_t\right)$
         \STATE $\hat{\rvm}_{t} = \rvg_t+ \Psi(\hat{\beta}_t, \hat{\beta}_{t-1}) \cdot \hat{\rvm}_{t-1}$
         \STATE $\vtheta_{t+1} = \Pi_{\gX}\left(\vtheta_t  - \alpha_t\cdot \hat{\rvm}_{t}\right)$
         \ENDFOR
         \STATE {\bfseries Return:} $\vtheta_{\mathrm{o}}$ with a discrete distribution as \\ $P(\mathrm{o}=i) = \alpha_{i-1}\Big/\left(\sum_{\tau=1}^{T-1}\right)\alpha_{\tau},\ 2\le \mathrm{o}\le T$. 
 \end{algorithmic}
 \end{algorithm}

Enlightened by the analysis in section~\ref{sec:3_1}, we realize that adding the projection penalty ($T_4$ in eq.~\ref{eq:Adagrad_objectiv_upperbound}) is to ensure that the descent direction does not increase previous mini-batch losses.
If we replace it with a weighted inner products penalty, we can even expect the descent direction make decrease of  both the current mini-batch loss and the previous mini-batch losses.
Hence, we can formulate the objective of the new iteration auxiliary problem as:
\begin{equation}
    \label{eq:acutum_proximal_subproblem}
    \begin{split}
        \tilde{f}_{\sA_t}(\vtheta)=\left(\vtheta-\vtheta_t\right)^T\rvg_t + \frac{L_t}{2}\left\|\vtheta- \vtheta_t\right\|^2 + \left(\vtheta-\vtheta_t\right)^T\hat{\rvg}_t,\quad   \hat{\rvg}_t \coloneqq \hat{\beta}_t\sum_{\tau=1}^{t}w_\tau\rvg_\tau,
    \end{split}
\end{equation}
where $L_t$ denotes the constant of smoothness, and $w_\tau$ denotes the weight to measure the approximation confidence about $\rvg_\tau$. 
For updating $\vtheta$ through minimizing eq.~\ref{eq:acutum_proximal_subproblem}, the optimum of the quadratic function satisfies
\begin{equation}
    \label{eq:update_paradigm_general}
    \frac{\partial\tilde{f}_{\sA_t}(\vtheta) }{\partial\vtheta}|_{\vtheta=\vtheta_{t+1}}= \rvg_t + L_t(\vtheta_{t+1} - \vtheta_t)+\hat{\rvg}_t= 0 \Leftrightarrow \vtheta_{t+1} =\vtheta_t - \frac{\hat{\rvg}_t + \rvg_t}{L_t}.
\end{equation}
Notice that if we set $\hat{\beta}_t =1$ and $w_\tau = \beta_{0}^\tau$ in eq.~\ref{eq:acutum_proximal_subproblem}, eq.~\ref{eq:update_paradigm_general} corresponds to the update paradigm of SGD-momentum coincidentally.
Thus, we may get the benefit of generalization performance from the iteration auxiliary problems.
Different from SGD-momentum, we want to guarantee a sufficient descent for the loss of current mini-batch. Hence, $\hat{\beta}_t$ is requested to have 
\begin{equation}
    \label{ineq:sufficient_descent_guarantee}
    \begin{split}
        \hat{\beta}_t:=\frac{\beta_t\left\|\rvg_t\right\|}{\left\|\sum_{\tau=1}^{t-1}w_\tau \rvg_\tau\right\| + \delta_t},\ \beta_t\le 1 \Rightarrow
        \hat{\beta}_t\le \frac{\left\|\rvg_t\right\|}{\left\|\sum_{\tau=1}^{t-1}w_\tau \rvg_\tau\right\|}
        \Rightarrow
        \hat{\beta}_t\left\|\sum_{\tau=1}^{t-1}w_\tau \rvg_\tau\right\| \le \left\|\rvg_t\right\|,
    \end{split}
\end{equation}
for Eq.~\ref{eq:acutum_proximal_subproblem}.
Notice that the last inequality of eq.~\ref{ineq:sufficient_descent_guarantee} implies current mini-batch loss descent as
\begin{equation*}
    \begin{aligned}[b]
        f_{\sA_t}(\vtheta_{t+1}) - f_{\sA_t}(\vtheta_t) \mathop{\le}^{\circled{1}} & (\vtheta_{t+1} - \vtheta_t)^T\rvg_t + \frac{L_t}{2}\|\vtheta_{t+1} - \vtheta_t\|^2\\ 
        \mathop{=}^{\circled{2}} & -\left[\frac{\|\rvg_t\|^2+\hat{\rvg}_t^T\rvg_t}{L_t}\right] + \frac{L_t}{2}\cdot\frac{\|\rvg_t+\hat{\rvg}_t\|^2}{L_t^2} = -\frac{\|\rvg_t\|^2}{2L_t} + \frac{\|\hat{\rvg}_t\|^2}{2L_t} \mathop{\le}^{\circled{3}} 0,
    \end{aligned}
\end{equation*}
where $\circled{1}$ follows from the smoothness assumption, $\circled{2}$ is established due to eq.~\ref{eq:update_paradigm_general} and  $\circled{3}$ is from eq.~\ref{ineq:sufficient_descent_guarantee}.
Besides, we denote $\sum_{\tau=1}^{t-1}w_\tau \rvg_\tau$ as $\hat{\rvm}_{t-1}$ with the following iteration to generate $w_\tau$s automatically,
\begin{equation}
    \hat{\rvm}_{t} = \rvg_t + \beta_t\cdot\frac{\left\|\rvg_t\right\|}{\left\|\hat{\rvm}_{t-1}\right\|+\delta_t}\cdot \hat{\rvm}_{t-1},
\end{equation}
where $\beta_t$ and $\delta_t$ are hyper-parameters. 
From a geometric perspective, $\hat{\rvm}_t$ is the angle bisector of $\rvg_t$ and $\hat{\rvm}_{t-1}$ coincidentally when $\beta_t = 1$ and $\delta_t=0$ (see Figure~\ref{fig:acmo_intuition}).
Then, we can reformulate the iteration auxiliary problem of \method as:
\begin{equation*}
    \begin{split}
    \vtheta_{t+1} = \mathop{\argmin}\limits_{\vtheta}\ &\frac{L_t}{2}\left\|\vtheta-\vtheta_{t}\right\|^2+\left(\vtheta-\vtheta_t\right)^T\rvg_t+ \beta_t\cdot\frac{\left\|\rvg_t\right\|}{\left\|\hat{\rvm}_{t-1}\right\|+\delta_t}\cdot\left(\vtheta-\vtheta_t\right)^T\hat{\rvm}_{t-1},
    \end{split}
\end{equation*}
where we can obtain the update paradigm of \method as:
\begin{equation}
    \vtheta_{t+1} = \vtheta_t - \alpha_t\left(\rvg_t + \beta_t \cdot \frac{\left\|\rvg_t\right\|}{\left\|\hat{\rvm}_{t-1}\right\|+\delta_t}\cdot \hat{\rvm}_{t-1}\right).
\end{equation} 

In summary, we observe that 
(i) if $\vtheta_t - \vtheta_{t+1}$ forms acute angles with both $\rvg_t$ and $\nabla f_{\sA_\tau}(\vtheta_t)$, rather than penalizing the projection as Adagrad, we can obtain descent on both current and previous batches;
(ii) to handle the case when the estimation of accumulative weighted gradients using $\hat{\rvm}_{t-1}$ is not accurate, we expect the current gradient $\rvg_t$ to dominate the descent direction for guaranteeing a sufficient descent of current mini-batch loss.
Hence, we propose a new first moment optimizer, which attach the properties of the second moment to first moment iterations, inspired by the iteration auxiliary problem of Adagrad. 
The proposed algorithm \method is shown in Algorithm~\ref{alg:acutum}.
Note that, $\Psi(\cdot)$ is a function to guarantee a sufficient descent of the iteration auxiliary problem. In practice, we usually set $\Psi(\hat{\beta}_t, \hat{\beta}_{t-1}) = \hat{\beta}_t$. 

\section{Theoretical Results}
\label{sec:analysis}
In this section, we provide the convergence about gradient norm in expectation for our \method in nonconvex settings. 
Our theoretical results show that \method obtains the same convergence rate with Adam-type optimizers. All details of our proof can be found in our supplementary material. 

We list assumptions required in convergence analysis, and then provide the main theoretical results.
\begin{assumption}
  \label{ass:gradient_lipschitz_continuity}
  We assume the loss function $f(\vtheta)$ is differentiable, and has $L$-Lipschitz gradient, i.e., for any feasible solution $\vtheta_i,\vtheta_{j}\in \mathbb{R}^{d}$, $\|\nabla f_i(\vtheta_i) - \nabla f_j(\vtheta_j)\| \le L\|\vtheta_i-\vtheta_{j}\|$.
\end{assumption}
\begin{assumption}
  \label{ass:bounded_minimization}
  We assume the objective $f(\vtheta)$ is lower bounded, which means $ \min_{\vtheta}\ f(\vtheta)> -\infty$.
\end{assumption}
\begin{assumption}
  \label{ass:bounded_gradient_variance}
  For the mini-batch loss $f_{\sA_t}(\vtheta)$ at iteration $t$, we assume stochstic gradients, e.g., $\nabla f_{\sA_t}(\vtheta)$ and $\nabla f_i(\vtheta)$ satisfy
  \begin{equation*}
    \begin{split}
        &\mathbb{E}\left[\nabla f_{\sA_t}(\vtheta)\right] = \nabla f(\vtheta),\quad \left\|\nabla f_{\sA_t}(\vtheta)\right\|\le G,\quad \max_i\left\{\left\|\nabla f_i(\vtheta)-\nabla f(\vtheta)\right\|^2\right\}\le \sigma^2.
    \end{split}
  \end{equation*}
\end{assumption}
\begin{theorem}
\label{thm:gradient_norm_expectation_bound}
	Suppose Assumption~\ref{ass:gradient_lipschitz_continuity},~\ref{ass:bounded_minimization},~\ref{ass:bounded_gradient_variance} hold. If we set $\epsilon \ge 0$, $\delta\ge \sigma$, $\beta_t \le \frac{1}{50}$ and
	\begin{equation*}
		\begin{split}
			\alpha_t \le &\frac{3}{\left(4L+1240\right)\sqrt{t}},\quad \Psi(\hat{\beta}_{t+1}, \hat{\beta}_{t}) = \min\left\{\hat{\beta}_{t+1}, \sqrt{\frac{t+1}{t}}\hat{\beta}_{t}\right\}
		\end{split}
	\end{equation*}
	without loss of generality.  Then, the output of \method satisfies
	\begin{equation*}
		\mathbb{E}\left[\left\|\nabla f(\vtheta_o)\right\|^2\right]\le \frac{C_0}{\sqrt{T}} +   \frac{C_1\log(T)}{\sqrt{T}}
	\end{equation*}
	where $C_0$ and $C_1$ are constants independent with $T$, $n$, $d$ and presented in our proof.
\end{theorem}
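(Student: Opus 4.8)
The plan is to follow the standard template for nonconvex convergence of momentum-type methods: use the $L$-smoothness of $f$ (Assumption~\ref{ass:gradient_lipschitz_continuity}) to get a descent inequality for $f(\vtheta_{t+1})-f(\vtheta_t)$ in terms of the update $\hat{\rvm}_t$, take conditional expectations, telescope over $t=1,\dots,T$, and bound the resulting error terms using the step-size schedule $\alpha_t = \Theta(1/\sqrt{t})$, the momentum bound $\beta_t\le 1/50$, the gradient bound $G$, the variance bound $\sigma^2$, and $\delta\ge\sigma$. The output distribution over $\vtheta_o$ is chosen precisely so that $\mathbb{E}[\|\nabla f(\vtheta_o)\|^2]$ equals a weighted average $\big(\sum_t \alpha_t \|\nabla f(\vtheta_t)\|^2\big)\big/\big(\sum_t\alpha_t\big)$, so the final bound follows by dividing the telescoped sum by $\sum_{t}\alpha_t = \Theta(\sqrt{T})$ and collecting the $O(1)$ and $O(\log T)$ numerator contributions into $C_0$ and $C_1$.

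First I would control the auxiliary momentum term $\hat{\rvm}_t$. The key structural fact is the sufficient-descent construction of Section~\ref{sec:algorithm_detail}: by the definition $\hat{\beta}_t = \beta_t\|\rvg_t\|/(\|\hat{\rvm}_{t-1}\|+\delta_t)$ and the $\Psi$-clipping, the ``correction'' part $\Psi(\hat\beta_t,\hat\beta_{t-1})\hat{\rvm}_{t-1}$ has norm at most a small multiple of $\|\rvg_t\|$, so $\|\hat{\rvm}_t\| \le c\|\rvg_t\| \le cG$ for an explicit constant $c$ close to $1$ (this is where $\beta_t\le 1/50$ and the $\sqrt{(t+1)/t}$ factor enter — they keep the geometric-like accumulation from blowing up). Plugging $\vtheta_{t+1}-\vtheta_t = -\alpha_t\hat{\rvm}_t$ into the smoothness inequality gives $f(\vtheta_{t+1}) \le f(\vtheta_t) - \alpha_t\langle\nabla f(\vtheta_t),\hat{\rvm}_t\rangle + \tfrac{L\alpha_t^2}{2}\|\hat{\rvm}_t\|^2$, and the quadratic term is already summable-after-division since $\sum_t \alpha_t^2 = O(\log T)$ — this is the source of the $C_1\log(T)/\sqrt{T}$ term.

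The crux is the cross term $-\alpha_t\langle\nabla f(\vtheta_t),\hat{\rvm}_t\rangle$. Writing $\hat{\rvm}_t = \rvg_t + (\text{correction}_t)$, the gradient part contributes $-\alpha_t\langle\nabla f(\vtheta_t),\rvg_t\rangle$, whose conditional expectation is $-\alpha_t\|\nabla f(\vtheta_t)\|^2$ by unbiasedness (Assumption~\ref{ass:bounded_gradient_variance}) — this is the ``good'' term we want to telescope. The correction part must be shown to be a lower-order perturbation: its inner product with $\nabla f(\vtheta_t)$ is bounded via Cauchy--Schwarz and Young's inequality by something like $\tfrac{1}{2}\alpha_t\|\nabla f(\vtheta_t)\|^2$ plus an error controlled by $\|\hat{\rvm}_{t-1}\|$ and the gradient drift $\|\nabla f(\vtheta_t)-\nabla f(\vtheta_{t-1})\|\le L\alpha_{t-1}\|\hat{\rvm}_{t-1}\|$. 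Absorbing the $\tfrac12\alpha_t\|\nabla f(\vtheta_t)\|^2$ into the good term (leaving a $\tfrac12$ coefficient) and summing the remaining errors, one uses $\delta\ge\sigma$ and the variance bound to keep the $\hat{\beta}_t$-weighted noise terms summable; the residue is again $O(1)+O(\log T)$ after normalizing by $\sum_t\alpha_t$.

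The main obstacle I anticipate is precisely this cross-term bookkeeping: $\hat{\rvm}_{t-1}$ is a biased, data-dependent combination of all past stochastic gradients (the weights $w_\tau$ themselves depend on realized gradient norms), so one cannot simply take expectations inside. The right move is to \emph{not} unfold $\hat{\rvm}_{t-1}$ at all — instead treat $\|\hat{\rvm}_{t-1}\|\le cG$ as a deterministic bound, couple $\hat{\rvm}_t$ to $\hat{\rvm}_{t-1}$ through the one-step recursion, and set up a potential function $\Phi_t = f(\vtheta_t) + \gamma\alpha_{t}\|\hat{\rvm}_{t-1}\|^2$ (or a similar $\alpha_t$-weighted momentum term) whose one-step change can be bounded cleanly, with the $\sqrt{(t+1)/t}$ clipping in $\Psi$ exactly compensating the $\alpha_t/\alpha_{t-1}$ ratio so the potential terms telescope. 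Getting the constants to close — in particular verifying that $\beta_t\le 1/50$ and $\alpha_t\le 3/((4L+1240)\sqrt t)$ make all the absorbed coefficients strictly positive — is the delicate, and somewhat tedious, part; everything else is the routine telescoping-and-divide argument sketched above.
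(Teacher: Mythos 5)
You have the right skeleton (telescope, normalize by $\sum_t\alpha_t$, attribute the $\log T$ to $\sum_t\alpha_t^2$, exploit $\hat\beta_{t+1}/\hat\beta_t\le\sqrt{(t+1)/t}=\alpha_t/\alpha_{t+1}$), and you correctly flag the cross term $-\alpha_t\langle\nabla f(\vtheta_t),\hat{\rvm}_{t-1}\rangle$ as the crux. But both of your proposed ways of handling it leave an error that does \emph{not} vanish, so there is a genuine gap.

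Concretely: applying smoothness directly at $\vtheta_t$ and then splitting the correction with Cauchy--Schwarz/Young gives a per-step error of the form $\tfrac{\alpha_t\hat\beta_t}{2}\|\hat{\rvm}_{t-1}\|^2$, which after taking expectations is of order $\alpha_t\,\beta_t\,(\sigma^2+\|\nabla f(\vtheta_t)\|^2)$. Summing the $\sigma^2$ part gives $\Theta(\beta_0\sigma^2\sum_t\alpha_t)=\Theta(\beta_0\sigma^2\sqrt T)$, so after dividing by $\sum_t\alpha_t$ you are left with a constant floor of order $\beta_0\sigma^2$, not $O(\log T/\sqrt T)$. Your proposed fix, the potential $\Phi_t=f(\vtheta_t)+\gamma\alpha_t\|\hat{\rvm}_{t-1}\|^2$, suffers the same fate: the $-\gamma\alpha_t\|\hat{\rvm}_{t-1}\|^2$ term in $\Phi_{t+1}-\Phi_t$ can cancel the above, but only if $\gamma\gtrsim\hat\beta_t$ (a constant), and then the matching head $+\gamma\alpha_{t+1}\|\hat{\rvm}_t\|^2$ contributes, in expectation, $\Theta(\gamma\alpha_{t+1}\sigma^2)$ per step---again a $\Theta(\sqrt T)$ accumulation and a constant floor. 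In short: any device that trades the cross term for an $O(\alpha_t)$-sized residual cannot reach $\tilde O(1/\sqrt T)$; the residuals must be $O(\alpha_t^2)$ or telescope to $O(\log T)$.

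What the paper does differently---and what you are missing---is a \emph{linear} auxiliary sequence
\[
\vtheta'_i=\vtheta_i+\Bigl(\tfrac{6}{\sqrt{i+1}}+1\Bigr)\frac{\hat\beta_i}{\sqrt{\tfrac{i}{i-1}}-\hat\beta_i}\,(\vtheta_i-\vtheta_{i-1}),
\]
the adaptive-coefficient analogue of the classical $z_t=\vtheta_t+\tfrac{\beta}{1-\beta}(\vtheta_t-\vtheta_{t-1})$ trick. Smoothness is applied at $\vtheta'_t$, not $\vtheta_t$. When one expands $\vtheta'_{i+1}-\vtheta'_i$ via the update rule (Lemma~\ref{lem:constructed_sequence}), the $\hat{\rvm}_{i-1}$ term appears with a coefficient that is the \emph{difference} of two nearly equal quantities, and Lemma~\ref{lem:T_2_1_telescoping_sum_property} (plus Corollaries~\ref{cor:T_2_1_no_negative_property},~\ref{cor:T_2_3_no_negative_property}) shows this difference is nonnegative and at most $\alpha_0/i$. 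Hence the cross-term contribution accumulates only to $O(\alpha_0\log T)$ rather than $O(\sqrt T)$, and the final bound $O(\log T/\sqrt T)$ follows. The $\Psi$-clipping $\hat\beta_{i+1}/\hat\beta_i\le\sqrt{(i+1)/i}$ that you correctly identified is exactly the ingredient that makes those coefficient differences telescope (see \Eqref{eq:inequality_2prime_sufficient_condition_1}). This linear shift and the accompanying near-cancellation are the essential missing idea; a quadratic-in-$\hat{\rvm}$ potential does not expose the cancellation and cannot close the argument.

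A secondary but worth-noting point: bounding $\|\hat{\rvm}_t\|\le cG$ as you suggest uses Lemma~\ref{lem:acutum_bounded} ($(1-\beta_t)\|\rvg_t\|\le\|\hat{\rvm}_t\|\le(1+\beta_t)\|\rvg_t\|$), which is fine, but the paper also needs the lower bound and the quantity $\|\rvg_t\|/(\|\hat{\rvm}_{t-1}\|+\delta_t)$ to be controlled (Corollary~\ref{cor:iter_con_bound}, requiring $\delta_t\ge\sigma$), which your sketch does not engage with.
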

Theorem~\ref{thm:gradient_norm_expectation_bound} shows that \method has the same convergence rate as Adam-type optimizers~\cite{chen2018convergence}.
Comparing with theoretical results provided in~\cite{Chen2018}, ours has additional $O(\log T/\sqrt{T})$ term. 
Since we do not require the condition about the sparsity of gradients as~\cite{chen2018convergence}. 
Besides, we show that rather than requesting the coefficient sequence of first moments ($\beta_{1,t}$ in Adam-type optimizers) to be non-increasing, a gently increasing sequence ($\Psi(\cdot)$ in \method) can keep an $\tilde{O}(1/\sqrt{T})$ convergence rate for ACMo.
This result expands the range of hyper-parameters selection.

\section{Experiments}
\label{sec:experiments}
In this section, we conduct extensive experiments on image classification and neural machine translation tasks.
We want to demonstrate the generalization performance and the efficiency of \method, as compared with other adaptive gradient methods, e.g, Adam~\cite{Kingma2014}, Amsgrad~\cite{Reddi2019}, Adamw~\cite{Loshchilov2017},  PAdam~\cite{Chen2018}, and Adabound~\cite{Luo2019} and SGD-momentum.

\textbf{Hyperparameter Tuning}
Hyperparameters in optimizers can exert great impact on ultimate solutions found by optimization algorithms. In our experiments, we tuned over hyperparameters in the following way.
For all optimizers in our experiments, we chose the best initial set of step size from $\left\{1e-1, 5e-2, 1e-2, 5e-3, \ldots, 5e-5\right\}$.
For \emph{SGD-momentum}, we tuned the coefficient of momentum from $\{0.9, 0.8, \ldots, 0.1\}$.
For \emph{Adam, AMSGrad, AdamW, PAdam, and Adabound}, we turn over $\beta_1$ values of $\left\{0.9, 0.99\right\}$ and $\beta_2$ values of $\{0.99, 0.999\}$ and the perturbation value $\epsilon = 1e-8$ in image classification tasks. Besides, we set $\beta_1$ and $\beta_2$ as the suggested values of Transformer~\cite{vaswani2017attention}, where $\beta_1 = 0.9$ and $\beta_2 = 0.98$ in nerual machine translation tasks. For PAdam, we chose the best hyper-parameter $p$ from $\left\{1/4, 1/8, 1/16\right\}$.
For \emph{\method}, we directly applied the default hyperparameters, i.e., $\beta_{t}=0.9$, $\Psi(\hat{\beta}_t, \hat{\beta}_{t-1}) = \hat{\beta}_t$, for all our experiments.

\begin{figure*}[!tb]
    \centering
    \includegraphics[width=1.0\textwidth]{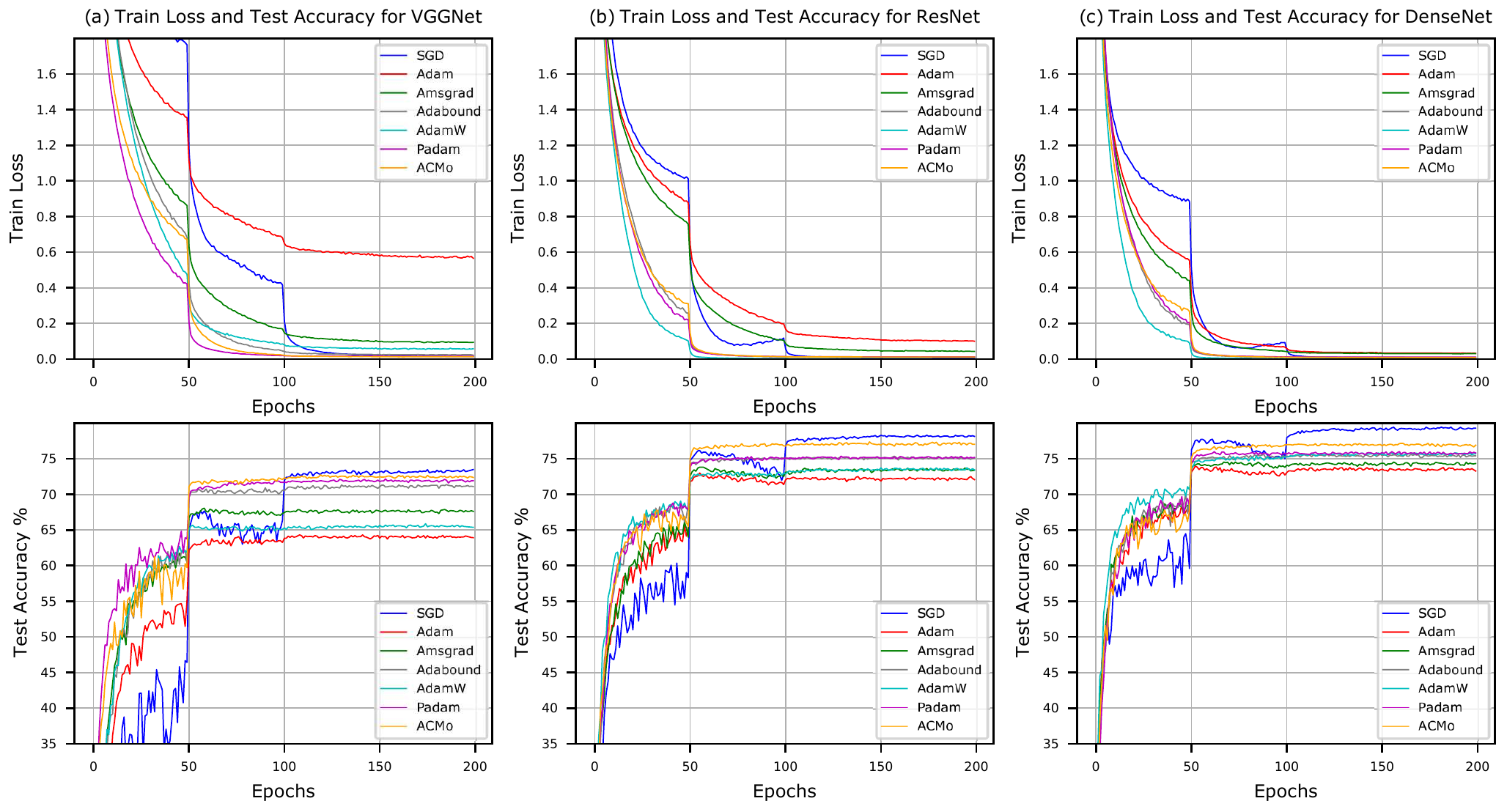}
    \vspace*{-20pt}
    \caption{\small Learning curves of optimizers for CNNs on CIFAR-100 image classification task. Top: training loss. Bottom:test accuracy.}
    \vspace*{-20pt}
    \label{fig:exp_fixed_weight_decay}
\end{figure*}

\subsection{Experiments on Image Classification Tasks}
In the image classification tasks, adaptive gradient methods display fast convergence, but poorer generalization results when compared to the well tuned SGD momentum, which includes proper hyper parameters for the learning and weight decay.
Note that introducing weight decay is equivalent to adding $l_2$ regularization to the objectives, and has a significant impact on the generalization ability of optimizers. 
Hence, our experiments were conducted from two perspectives as guidelines:
(i) We record the convergence of training loss and the test accuracy  of all optimizers with fixed weight decays (they optimize the same objective function). %
(ii) We adopt the optimal weight decays for all optimizers, then investigate their generalization ability based on the test accuracy. 

The paper then proceeds to introduce the experimental settings for the image classification tasks. 
We used two datasets CIFAR-10, CIFAR-100~\cite{krizhevsky2009learning}, and tested three different CNN architectures including VGGNet~\cite{Simonyan2014}, ResNet~\cite{He2016} and DenseNet~\cite{Huang2017}.
To achieve stable convergence, we ran 200 epochs, and set the learning rate to decay by $0.1$ every $50$ epochs.
We performed cross-validation to choose the best learning rates for all the optimizers and second moment parameters $\beta_2$ for all the adaptive gradient methods.

\textbf{Experiment with Fixed Weight Decay:}
We first evaluated CIFAR-100 dataset.
In this experiment, the values of weight decay in all optimizers were fixed, and were chosen to be the weight decay in SGD-momentum when it achieves the maximum test accuracy.

\emph{Though  \method was second only to SGD momentum in terms of generalization performance, it was significantly faster than not only the latter, but also Adam and AMSGrad in terms of convergence speed. Its convergence speed is comparable with SOTA Adam variants.}
From the first row of Figure~\ref{fig:exp_fixed_weight_decay}, i.e, the training curves of three tests, we observed that \method significantly outperforms Adam, and has comparable rate with PAdam and Adabound when we fixed the weight decay.
Also, in the second row of  Figure~\ref{fig:exp_fixed_weight_decay}, i,e, the test accuracy, our \method outperformed all benchmarks except only SGD-momentum.
However, the  training loss converges much slower when implemented with SGD-momentum which is also recorded in other literature.

\begin{figure*}[!tb]
    \centering
    \includegraphics[width=1.0\textwidth]{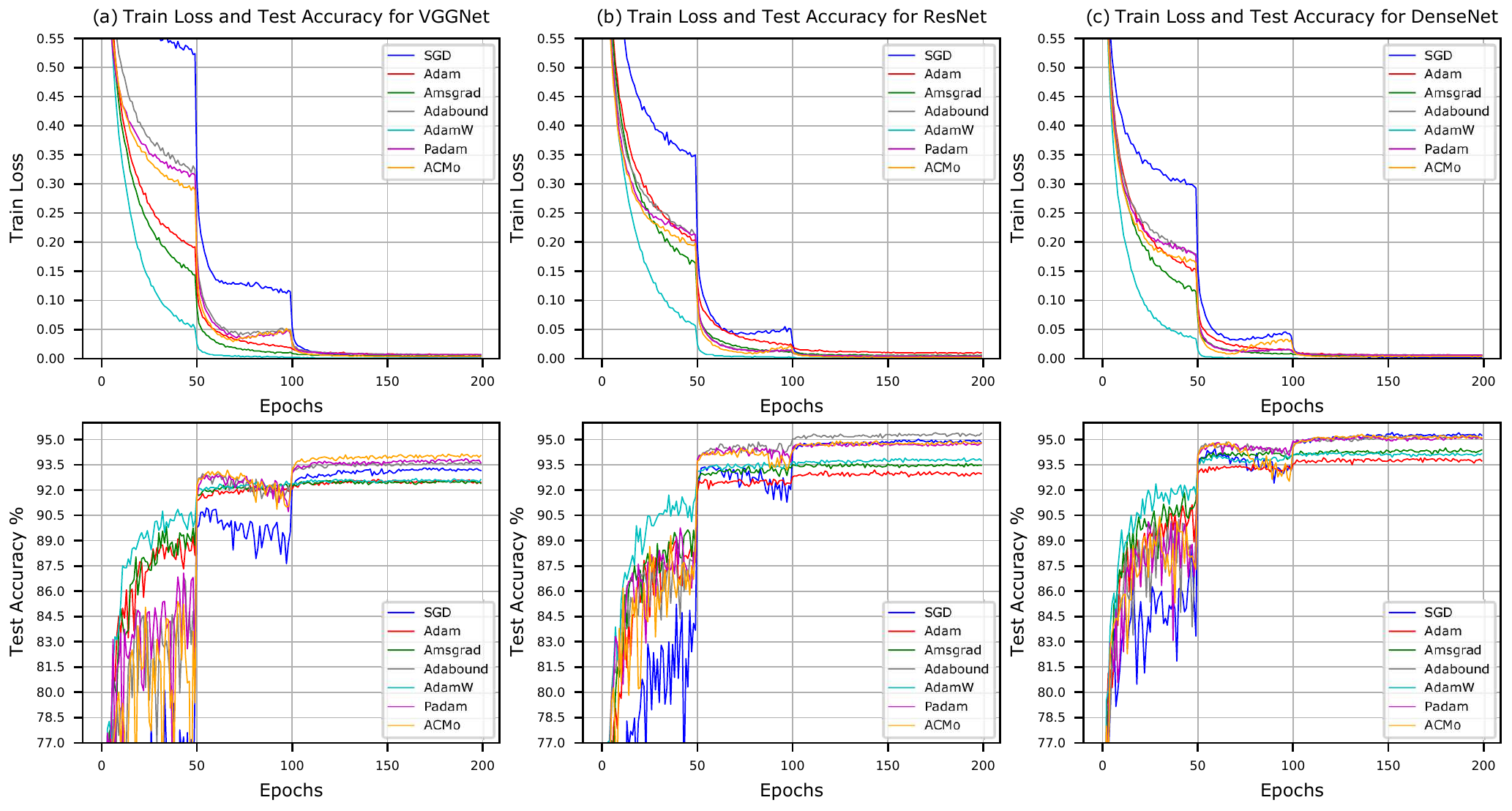}
    \vspace*{-20pt}
    \caption{\small Learning curves of optimizers for CNNs on CIFAR-10 image classification task. Top: training loss. Bottom:test accuracy.}
    \vspace*{-10pt}
    \label{fig:optim_weight_decay}
\end{figure*}

\begin{figure*}[!tb]
    \centering
    \includegraphics[width=1.0\textwidth]{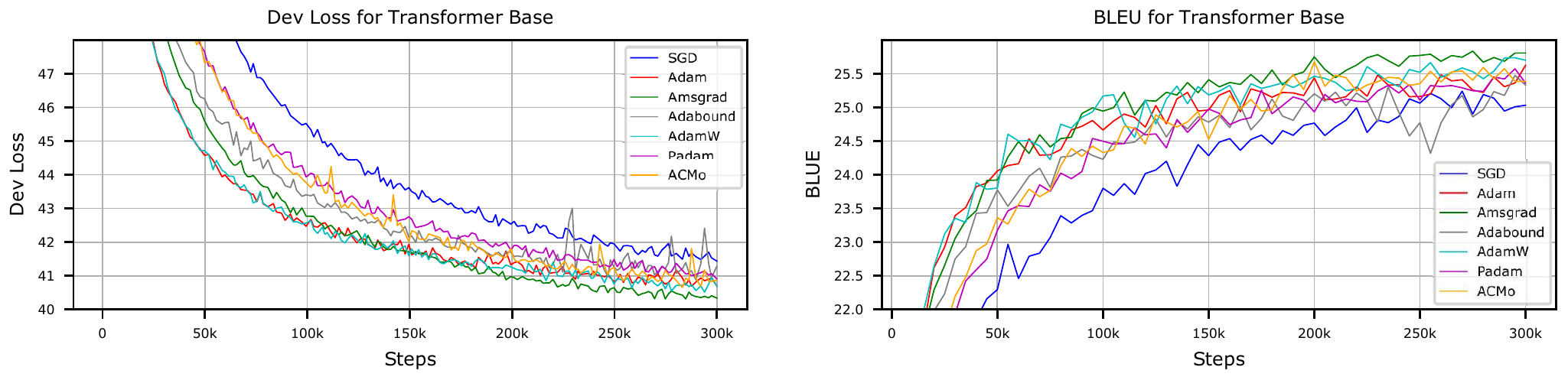}
    \vspace*{-23pt}
    \caption{\small Learning curves of optimizers for Transformer-base on WMT'14 EN-DE machine translation task. (Dev loss \& BLEU score)}
    \vspace*{-15pt}
    \label{fig:nmt_bleu}
\end{figure*}

\textbf{Experiment with Optimal Weight Decay}
We conduct experiments on CIFAR-10 dataset, and find the optimal weight decay for each optimizer which can achieve the best test accuracy. 

\emph{\method obtains the top three results of generalization performance in different architectures with comparable efficiency to SOTA Adam-type optimizers.} As shown in Figure~\ref{fig:optim_weight_decay}, Adam, AdamW and Amsgrad did not perform as well as other optimizers in the plots of test accuracy.
In this weight decay setting,
PAdam, Adabound and \method can even outperform SGD-momentum on VGGNet at the expense of sacrificing some efficiency. 
Nonetheless, they were able to outperform Adam by more than 2 percentage points for the test accuracy. 
The experimental results show that optimizers which converge faster than \method generalize worse.

\subsection{Experiments on Neural Machine Translation Tasks}
Different from image classification tasks, in NLP tasks with attention models, i.e., neural machine translation (NMT), adaptive gradient methods are still mainstream optimizers~\cite{zhang2019adam}. 
This is especially true for Adam type optimizers, which boast a huge advantage over first moment methods in both convergence and generalization in \emph{the fixed learning rate setting}.
To validate the efficiency and generalization of \method, we performed experiments on WMT'14 EN-DE dataset with Transformer~\cite{vaswani2017attention}.

Now we introduce the experimental settings for NMT tasks. 
For preprocessing, sentences were encoded using byte-pair encoding, which has a shared source target vocabulary of about 37000 tokens. Furthermore, we utilized 4 Tesla-V100-PCIE-(16GB) GPUs to train the Transformer base, where we set batch token size as $4096$ per GPU in the training process.
In order to remove the interference of learning tricks, we conducted experiments with fixed step size and gradient clipping. 

\emph{ACMo obtains a comparable convergence and generalization performance to mainstream adaptive gradient methods, and a better performance to SGD-momentum from both convergence and generalization in the fix learning rate setting.}
From Figure~\ref{fig:nmt_bleu}, We observed that even though the descend of ACMo in the early stage of training is somewhat slow, its rapid descend in the middle stage of optimization allows it to outperform some adaptive gradient methods, i.e. Adabound and Padam. Finally, ACMo  yielded DevLoss and BLEU results that are comparable with those in Adam and AdamW, and displayed better results than SGD momentum on the fixed learning rate settings. The above results match the records in other literature~\cite{zhang2019adam}.
\vspace{-5pt}

\section{Conclusion}
\label{sec:conclu}
\vspace{-5pt}
In this paper, we revisited the existing adaptive optimization methods from a novel point of view.
We found that 
the widely used second moments essentially penalize the projection of the current descent direction on previous gradients.
Following such a new idea, we proposed a new method \method.
It removes the second moments and constructs a decent direction by forming acute angles with both current and (approximated) previous gradients.
We analyzed its convergence property in the nonconvex setting, and denote that \method shares the same convergence about gradient norm with SOTA Adam-type optimizers. Last, extensive experiments on CV and NLP tasks validated its efficiency and generalization ability.

\bibliographystyle{unsrt}  
\bibliography{references}  

\newpage
\appendix
\section{Some Important Lemmas}
In this section, we show several important lemmas for preparing the proof of our main theorem. 
Besides, we also denote some notations to make the main theorem more clear.
\begin{lemma}
    \label{lem:acutum_bounded}
    If we update $\hat{\rvm}_{t}$ as the Step.5 in Algorithm 1, $\hat{\rvm}_{t}$ satisfies:
    \begin{equation*}
        \begin{split}
            (1-\beta_t)\left\|\rvg_t\right\| \le \left\|\hat{\rvm}_t\right\| \le (1+\beta_t)\left\|\rvg_t\right\|
        \end{split}
    \end{equation*}
    when $0\le \beta_t \le 1$ and $\delta_t \ge 0$.
\end{lemma}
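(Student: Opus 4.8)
The plan is to obtain both inequalities from a single application of the triangle inequality to the moment update $\hat{\rvm}_t = \rvg_t + \Psi(\hat{\beta}_t, \hat{\beta}_{t-1})\,\hat{\rvm}_{t-1}$ (Step~6 of Algorithm~\ref{alg:acutum}), after first controlling the size of the correction term $\Psi(\hat{\beta}_t, \hat{\beta}_{t-1})\,\hat{\rvm}_{t-1}$ by $\beta_t\|\rvg_t\|$. The key observation is that the whole estimate is pointwise in $\hat{\rvm}_{t-1}$, so no induction on $t$ is required.

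First I would record the elementary bound $0 \le \Psi(\hat{\beta}_t,\hat{\beta}_{t-1}) \le \hat{\beta}_t$, which holds both for the practical choice $\Psi = \hat{\beta}_t$ and for the choice $\Psi(\hat{\beta}_{t+1},\hat{\beta}_t) = \min\{\hat{\beta}_{t+1}, \sqrt{(t+1)/t}\,\hat{\beta}_t\}$ used in Theorem~\ref{thm:gradient_norm_expectation_bound}, since $\hat{\beta}_t,\hat{\beta}_{t-1}\ge 0$. Then, using the definition $\hat{\beta}_t = \beta_t\|\rvg_t\|/(\|\hat{\rvm}_{t-1}\| + \delta_t)$ from Step~5, I obtain
\begin{equation*}
\Psi(\hat{\beta}_t,\hat{\beta}_{t-1})\,\|\hat{\rvm}_{t-1}\| \le \hat{\beta}_t\,\|\hat{\rvm}_{t-1}\| = \beta_t\|\rvg_t\|\cdot\frac{\|\hat{\rvm}_{t-1}\|}{\|\hat{\rvm}_{t-1}\| + \delta_t} \le \beta_t\|\rvg_t\|,
\end{equation*}
where the last step uses $\delta_t \ge 0$ (and the term is read as $0$ when $\hat{\rvm}_{t-1} = \vzero$, e.g. at $t=1$, directly from the update rule rather than from the then-undefined expression for $\hat{\beta}_t$).

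With this in hand, the triangle inequality gives $\|\hat{\rvm}_t\| \le \|\rvg_t\| + \Psi(\hat{\beta}_t,\hat{\beta}_{t-1})\,\|\hat{\rvm}_{t-1}\| \le (1+\beta_t)\|\rvg_t\|$, and the reverse triangle inequality gives $\|\hat{\rvm}_t\| \ge \|\rvg_t\| - \Psi(\hat{\beta}_t,\hat{\beta}_{t-1})\,\|\hat{\rvm}_{t-1}\| \ge (1-\beta_t)\|\rvg_t\|$, which is exactly the claim. There is no genuine obstacle here; the only points requiring mild care are making the bound on $\Psi(\cdot)$ cover whichever form of $\Psi$ is in force and handling the degenerate case $\hat{\rvm}_{t-1} = \vzero$ correctly.
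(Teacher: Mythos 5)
Your proposal is correct and is essentially the paper's own argument: both proceed pointwise in $\hat{\rvm}_{t-1}$ by a single application of the triangle (and reverse triangle) inequality, after bounding the correction term by $\beta_t\|\rvg_t\|$ via $\|\hat{\rvm}_{t-1}\|/(\|\hat{\rvm}_{t-1}\|+\delta_t)\le 1$. You are in fact slightly more careful than the paper's proof, which silently substitutes $\Psi(\hat{\beta}_t,\hat{\beta}_{t-1})=\hat{\beta}_t$ into the update, whereas you explicitly note that the bound $0\le\Psi(\hat{\beta}_t,\hat{\beta}_{t-1})\le\hat{\beta}_t$ covers the general $\Psi$ used in Theorem~\ref{thm:gradient_norm_expectation_bound} and you flag the degenerate case $\hat{\rvm}_{t-1}=\vzero$.
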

\begin{proof}
    For any given $\hat{\rvm}_{t-1}$ at the Step.5 in Algorithm 1, we have
    \begin{equation*}
        \left\|\hat{\rvm}_t\right\| = \left\|\hat{\rvm}_{t-1}\cdot \frac{\left\|\rvg_t\right\|}{\left\|\hat{\rvm}_{t-1}\right\|+\delta_t}\cdot \beta_t + \rvg_t\right\|.
    \end{equation*}
    With the triangle inequality, we have
    \begin{equation}
        \label{eq:lem_1_triangle_ine}
        \left\|\rvg_t\right\| - \beta_t\cdot\left\|\hat{\rvm}_{t-1}\cdot \frac{\left\|\rvg_t\right\|}{\left\|\hat{\rvm}_{t-1}\right\|+\delta_t}\right\| \le \left\|\hat{\rvm}_{t}\right\| \le \left\|\rvg_t\right\|+\beta_t\cdot\left\|\hat{\rvm}_{t-1}\cdot \frac{\left\|\rvg_t\right\|}{\left\|\hat{\rvm}_{t-1}\right\|+\delta_t}\right\|,
    \end{equation}
    where
    \begin{equation}
        \label{eq:lem_1_cauthy_ine}
        \left\|\hat{\rvm}_{t-1}\cdot \frac{\left\|\rvg_t\right\|}{\left\|\hat{\rvm}_{t-1}\right\|+\delta_t}\right\|\le \left\|\hat{\rvm}_{t-1}\right\|\cdot \frac{\left\|\rvg_t\right\|}{\left\|\hat{\rvm}_{t-1}\right\|+\delta_t}\le \left\|\rvg_t\right\|.
    \end{equation}
    Submitting \Eqref{eq:lem_1_cauthy_ine} back into \Eqref{eq:lem_1_triangle_ine}, we complete the proof.
\end{proof}
\begin{lemma}
    \label{lem:mini_batch_variance_bound}
    According to Assumption~4.3, for any $1\le t\le T$, if the size of mini-batch $\left|\sA_{t}\right|$ in the training process is $B$, we have
    \begin{equation*}
        \left\|\nabla f_{\sA_{t}}(\vtheta_t) - \nabla f(\vtheta_t) \right\|\le \sigma
    \end{equation*}
\end{lemma}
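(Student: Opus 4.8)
The plan is to write the mini-batch gradient as an unweighted average over the sampled indices and then combine the triangle inequality with the uniform per-sample bound supplied by Assumption~4.3. Following the Notations paragraph, $f_{\sA_t}(\vtheta) = \frac{1}{|\sA_t|}\sum_{i\in\sA_t} f_i(\vtheta)$, so $\nabla f_{\sA_t}(\vtheta_t) = \frac{1}{B}\sum_{i\in\sA_t}\nabla f_i(\vtheta_t)$ since $|\sA_t| = B$. The first step is therefore to observe that $\nabla f(\vtheta_t)$ can be pulled inside this average, $\nabla f(\vtheta_t) = \frac{1}{B}\sum_{i\in\sA_t}\nabla f(\vtheta_t)$, so that the deviation of interest takes the form
\begin{equation*}
    \nabla f_{\sA_t}(\vtheta_t) - \nabla f(\vtheta_t) = \frac{1}{B}\sum_{i\in\sA_t}\bigl(\nabla f_i(\vtheta_t) - \nabla f(\vtheta_t)\bigr).
\end{equation*}

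The second step is to apply the triangle inequality (convexity of the norm) to this average, giving $\bigl\|\nabla f_{\sA_t}(\vtheta_t) - \nabla f(\vtheta_t)\bigr\| \le \frac{1}{B}\sum_{i\in\sA_t}\bigl\|\nabla f_i(\vtheta_t) - \nabla f(\vtheta_t)\bigr\|$. The third step invokes the last clause of Assumption~4.3, namely $\max_i\{\|\nabla f_i(\vtheta)-\nabla f(\vtheta)\|^2\}\le \sigma^2$, which is a bound uniform over the index $i$ and hence applies to every index actually drawn into $\sA_t$; taking square roots yields $\|\nabla f_i(\vtheta_t)-\nabla f(\vtheta_t)\| \le \sigma$ for each such $i$. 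Summing $B$ terms each at most $\sigma$ and dividing by $B$ then gives the claimed inequality $\bigl\|\nabla f_{\sA_t}(\vtheta_t)-\nabla f(\vtheta_t)\bigr\| \le \sigma$.

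I do not anticipate any real obstacle here — the statement is essentially immediate from the definitions. The only points requiring a little care are that the mini-batch gradient must be the unweighted mean (so $\nabla f(\vtheta_t)$ can be absorbed into the summation term by term), and that the variance-type bound in Assumption~4.3 is a $\max$ over all $i$, so it is valid for whichever indices the sampling produces, independently of the realization of $\sA_t$. Note in particular that neither the unbiasedness $\mathbb{E}[\nabla f_{\sA_t}(\vtheta)] = \nabla f(\vtheta)$ nor the gradient-norm bound $G$ is needed for this lemma; only the third part of the assumption is used.
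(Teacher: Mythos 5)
Your proof is correct and follows the same route as the paper: rewrite the deviation as an average of per-sample deviations, apply the triangle inequality, and invoke the uniform per-sample bound from Assumption~4.3. You simply spell out the intermediate algebraic steps that the paper leaves implicit.
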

\begin{proof}
    With the triangle inequality, we have
    \begin{equation}
        \begin{split}
            \left\|\nabla f_{\sA_{t}}(\vtheta_t) - \nabla f(\vtheta_t) \right\| \le \frac{1}{B}\sum_{i\in \sA_t}\left\|\nabla f_i(\vtheta_t) - \nabla f(\vtheta_t)\right\|\le \sigma
        \end{split}
    \end{equation}
    completing the proof.
\end{proof}
\begin{lemma}
    \label{lem:mini_batch_norm_bound}
    For each iteration $1\le t\le T$, we have
    \begin{equation}
        \mathbb{E}\left[\left\|\hat{\rvm}_t\right\|^2\right] \le \mathbb{E}\left[\left(1+\beta_t\right)^2\left\|\rvg_t\right\|^2\right]\le \frac{\left(1+\beta_t\right)^2}{2}\left(\sigma^2 + \mathbb{E}\left[\left\|\nabla f(\vtheta_t)\right\|^2\right]\right).
    \end{equation}
\end{lemma}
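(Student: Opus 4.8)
The plan is to dispatch the two inequalities in the chain separately, each being essentially a one-line consequence of a lemma already proved above. For the middle inequality I would simply invoke Lemma~\ref{lem:acutum_bounded}: since $0\le\beta_t\le 1$ and $\delta_t\ge 0$ hold here, it gives the pointwise estimate $\|\hat{\rvm}_t\|\le(1+\beta_t)\|\rvg_t\|$. Squaring (both sides are nonnegative) and taking expectations, with $\beta_t$ deterministic, produces $\mathbb{E}[\|\hat{\rvm}_t\|^2]\le\mathbb{E}[(1+\beta_t)^2\|\rvg_t\|^2]$.

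For the right inequality the mechanism is the bias--variance decomposition of the stochastic gradient $\rvg_t=\nabla f_{\sA_t}(\vtheta_t)$. I would write $\rvg_t=\nabla f(\vtheta_t)+\big(\nabla f_{\sA_t}(\vtheta_t)-\nabla f(\vtheta_t)\big)$ and condition on $\mathcal{F}_{t-1}$, the $\sigma$-field generated by $\sA_1,\dots,\sA_{t-1}$, so that $\vtheta_t$ is $\mathcal{F}_{t-1}$-measurable while $\sA_t$ is drawn independently of it. By the unbiasedness clause $\mathbb{E}[\nabla f_{\sA_t}(\vtheta)]=\nabla f(\vtheta)$ of Assumption~\ref{ass:bounded_gradient_variance}, the cross term in the expansion of $\|\rvg_t\|^2$ has zero conditional expectation, leaving $\mathbb{E}[\|\rvg_t\|^2\mid\mathcal{F}_{t-1}]=\|\nabla f(\vtheta_t)\|^2+\mathbb{E}[\|\nabla f_{\sA_t}(\vtheta_t)-\nabla f(\vtheta_t)\|^2\mid\mathcal{F}_{t-1}]$. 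The fluctuation term is at most $\sigma^2$, since Lemma~\ref{lem:mini_batch_variance_bound} gives $\|\nabla f_{\sA_t}(\vtheta_t)-\nabla f(\vtheta_t)\|\le\sigma$ for every realization of the mini-batch. Taking total expectations and multiplying through by $(1+\beta_t)^2$ then yields the right-hand side, up to the overall constant (the $\tfrac12$ in the display is immaterial to how this lemma is used in the main proof, where one only needs $\mathbb{E}[\|\hat{\rvm}_t\|^2]=O(\sigma^2+\mathbb{E}[\|\nabla f(\vtheta_t)\|^2])$).

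I do not expect a genuine obstacle here: this is a bookkeeping lemma feeding the main theorem. The one point deserving care is the conditioning in the second step, namely working with a filtration in which $\vtheta_t$ is measurable with respect to the past while $\sA_t$ is independent of it, which is exactly the regime under which Assumption~\ref{ass:bounded_gradient_variance} supplies unbiasedness. Should one wish to avoid unbiasedness entirely, the cruder bound $\|a+b\|^2\le 2\|a\|^2+2\|b\|^2$ applied to the same splitting, combined with the deterministic estimate of Lemma~\ref{lem:mini_batch_variance_bound}, still closes the argument at the cost of a larger constant.
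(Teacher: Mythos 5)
Your first step --- squaring the pointwise bound of Lemma~\ref{lem:acutum_bounded} and taking expectations --- is exactly the paper's. For the second step, your splitting $\rvg_t = \bigl(\rvg_t - \nabla f(\vtheta_t)\bigr) + \nabla f(\vtheta_t)$ is also the paper's, but the constant you arrive at is different, and here you are right and the paper is wrong. The paper asserts
\begin{equation*}
\mathbb{E}\bigl[\|\rvg_t - \nabla f(\vtheta_t) + \nabla f(\vtheta_t)\|^2\bigr] \le \tfrac{1}{2}\bigl(\mathbb{E}[\|\rvg_t - \nabla f(\vtheta_t)\|^2] + \mathbb{E}[\|\nabla f(\vtheta_t)\|^2]\bigr),
\end{equation*}
which is the false inequality $\|a + b\|^2 \le \tfrac12(\|a\|^2 + \|b\|^2)$ (take $a = b \ne 0$); this is where the spurious factor $\tfrac12$ in the stated lemma comes from, and the displayed bound is in fact unprovable as written: with $\sigma = 0$ and $\rvg_t = \nabla f(\vtheta_t) \ne 0$ it would demand $\|\nabla f(\vtheta_t)\|^2 \le \tfrac12 \|\nabla f(\vtheta_t)\|^2$. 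Your route --- using unbiasedness under conditioning on the past to annihilate the cross term --- gives the clean $\mathbb{E}[\|\rvg_t\|^2] \le \sigma^2 + \mathbb{E}[\|\nabla f(\vtheta_t)\|^2]$, i.e.\ the factor-$1$ version, and the fallback $\|a + b\|^2 \le 2\|a\|^2 + 2\|b\|^2$ gives factor $2$. You gesture at the mismatch by calling the $\tfrac12$ immaterial; it would be cleaner to say outright that the constant in the lemma statement should be $1$ (or $2$), an error that propagates only as bookkeeping into the constants of Theorem~\ref{thm:gradient_norm_expectation_bound}.
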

\begin{proof}
    According to Lemma~\ref{lem:acutum_bounded}, we have
    \begin{equation*}
        \mathbb{E}\left[\left\|\hat{\rvm}_t\right\|^2\right] \le \mathbb{E}\left[\left(1+\beta_t\right)^2\left\|\rvg_t\right\|^2\right]
    \end{equation*}
    Besides, with Assumption~4.3, we then obtain
    \begin{equation}
        \begin{split}
            \mathbb{E}\left[\left\|\rvg_t\right\|^2\right] =& \mathbb{E}\left[\left\|\rvg_t-\nabla f(\vtheta_t) + \nabla f(\vtheta_t)\right\|^2\right]\\
            \le &\frac{1}{2}\left(\mathbb{E}\left[\left\|\rvg_t - \nabla f(\vtheta_t)\right\|^2\right] + \mathbb{E}\left[\left\|\nabla f(\vtheta_t)\right\|^2\right]\right)\\
            = & \frac{1}{2}\sigma^2 + \frac{1}{2}\mathbb{E}\left[\left\|\nabla f(\vtheta_t)\right\|^2\right],
        \end{split}
    \end{equation}
    for the mini-batch gradient $\rvg_t$ to complete the proof.
\end{proof}
\begin{lemma}
    With assumptions proposed in Section 4, for any given $\delta_t \ge \sigma$ and mini-batch size satisfies $\left|\sA_{t}\right| = B$, we have
    \begin{equation*}
        \begin{split}
            \frac{\left\|\rvg_t\right\|}{\left\|\hat{\rvm}_{t-1}\right\| + \delta_t}\le 2+L\alpha_{t-1} + \frac{1}{1-\beta_{t-1}},
        \end{split}
    \end{equation*}
    where $\alpha_t$ is the step size at iteration $t$.
\end{lemma}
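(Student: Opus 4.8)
The plan is to bound the numerator $\|\rvg_t\|$ from above by an expression of the form $2\sigma + \big(L\alpha_{t-1} + \tfrac{1}{1-\beta_{t-1}}\big)\|\hat{\rvm}_{t-1}\|$, and then divide by $\|\hat{\rvm}_{t-1}\|+\delta_t$: the condition $\delta_t \ge \sigma$ turns the $2\sigma$ term into the constant $2$, while $\|\hat{\rvm}_{t-1}\|/(\|\hat{\rvm}_{t-1}\|+\delta_t) \le 1$ collapses the remaining two terms to $L\alpha_{t-1}$ and $\tfrac{1}{1-\beta_{t-1}}$, giving exactly the stated bound.

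To build the upper bound on $\|\rvg_t\|$ I would chain four elementary inequalities. First, peel off the stochastic error at step $t$: since $\rvg_t = \nabla f_{\sA_t}(\vtheta_t)$, Lemma~\ref{lem:mini_batch_variance_bound} and the triangle inequality give $\|\rvg_t\| \le \sigma + \|\nabla f(\vtheta_t)\|$. Second, move from $\vtheta_t$ back to $\vtheta_{t-1}$ using the $L$-Lipschitz gradient (Assumption~\ref{ass:gradient_lipschitz_continuity}) together with the update rule $\vtheta_t - \vtheta_{t-1} = -\alpha_{t-1}\hat{\rvm}_{t-1}$, so $\|\nabla f(\vtheta_t)\| \le \|\nabla f(\vtheta_{t-1})\| + L\alpha_{t-1}\|\hat{\rvm}_{t-1}\|$. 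Third, peel off the stochastic error at step $t-1$ by Lemma~\ref{lem:mini_batch_variance_bound} again: $\|\nabla f(\vtheta_{t-1})\| \le \sigma + \|\rvg_{t-1}\|$. Fourth, invoke the lower bound in Lemma~\ref{lem:acutum_bounded}, namely $(1-\beta_{t-1})\|\rvg_{t-1}\| \le \|\hat{\rvm}_{t-1}\|$, to express $\|\rvg_{t-1}\|$ in terms of $\|\hat{\rvm}_{t-1}\|$ (valid since $\beta_{t-1}<1$). Composing the four bounds yields $\|\rvg_t\| \le 2\sigma + L\alpha_{t-1}\|\hat{\rvm}_{t-1}\| + \tfrac{1}{1-\beta_{t-1}}\|\hat{\rvm}_{t-1}\|$, and dividing by $\|\hat{\rvm}_{t-1}\|+\delta_t$ and bounding the three resulting terms separately as described above completes the argument.

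\textbf{Main obstacle.} The chain above degenerates at $t=1$, where $\hat{\rvm}_0 = 0$ and there is no $\rvg_0$ to feed into the last two steps; I expect this base case to need the most care. One handles it either by appealing to the uniform gradient bound $\|\rvg_1\| \le G$ from Assumption~\ref{ass:bounded_gradient_variance} with a suitably chosen $\delta_1$, or simply by restricting the statement to $t \ge 2$, which suffices for the main theorem since the output index satisfies $o \ge 2$. A minor technical point to check is that the projection $\Pi_{\gX}$ appearing in the update does not invalidate $\|\vtheta_t - \vtheta_{t-1}\| \le \alpha_{t-1}\|\hat{\rvm}_{t-1}\|$: this holds because $\vtheta_{t-1} \in \gX$ and projection onto a convex set is nonexpansive, so the unprojected bound is only tightened.
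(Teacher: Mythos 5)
Your proposal is correct and is essentially the same argument the paper gives: the paper also writes $\nabla f_{\sA_t}(\vtheta_t)$ as a telescoping sum through $\nabla f(\vtheta_t)$, $\nabla f(\vtheta_{t-1})$, and $\nabla f_{\sA_{t-1}}(\vtheta_{t-1})$, applies Lemma~\ref{lem:mini_batch_variance_bound} twice, the Lipschitz assumption with the update rule once, and Lemma~\ref{lem:acutum_bounded} once, then divides by $\|\hat{\rvm}_{t-1}\|+\delta_t$ and bounds the four resulting ratios by $1$, $L\alpha_{t-1}$, $1$, and $\tfrac{1}{1-\beta_{t-1}}$ respectively. Your observations about the $t=1$ degenerate case and the nonexpansiveness of $\Pi_{\gX}$ are additional care the paper does not spell out, but they do not change the substance of the argument.
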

\begin{proof}
    With the definition of $\rvg_t$, we have $\rvg_t = \nabla f_{\sA_t}(\vtheta_t)$. We have
    \begin{equation}
        \label{eq:lem_3_trianlge_ine}
        \begin{split}
            \left\|\nabla f_{\sA_t}(\vtheta_t)\right\| = & \left\|\nabla f_{\sA_t}(\vtheta_t) - \nabla f(\vtheta_t) + \nabla f(\vtheta_t) - \nabla f(\vtheta_{t-1})\right.\\
            & \left.+ \nabla f(\vtheta_{t-1}) - \nabla f_{\sA_{t-1}}(\vtheta_{t-1})+\nabla f_{\sA_{t-1}}(\vtheta_{t-1})\right\|\\
            \le & \left\|\nabla f_{\sA_t}(\vtheta_t) - \nabla f(\vtheta_t)\right\| + \left\|\nabla f(\vtheta_t) - \nabla f(\vtheta_{t-1})\right\| \\
            & + \left\|\nabla f(\vtheta_{t-1}) - \nabla f_{\sA_{t-1}}(\vtheta_{t-1})\right\| + \left\|\nabla f_{\sA_{t-1}}(\vtheta_{t-1})\right\|.
        \end{split}
    \end{equation}
    Consider the expectation of $\left\|\rvg_t\right\|$ and~\Eqref{eq:lem_3_trianlge_ine}, we have
    \begin{equation}
        \label{eq:lem_3_des_ine}
        \begin{split}
            \frac{\left\|\rvg_t\right\|}{\left\|\hat{\rvm}_{t-1}\right\| + \delta_t}\le & \underbrace{\frac{\left\|\nabla f_{\sA_t}(\vtheta_t) - \nabla f(\vtheta_t)\right\|}{\left\|\hat{\rvm}_{t-1}\right\| + \delta_t}}_{T_{3.1}} + \underbrace{\frac{\left\|\nabla f(\vtheta_t)- \nabla f(\vtheta_{t-1})\right\|}{\left\|\hat{\rvm}_{t-1}\right\| + \delta_t}}_{T_{3.2}} \\
            & + \underbrace{\frac{\left\|\nabla f (\vtheta_{t-1})- \nabla f_{\sA_{t-1}}(\vtheta_{t-1})\right\|}{\left\|\hat{\rvm}_{t-1}\right\| + \delta_t}}_{T_{3.3}} +\underbrace{\frac{\left\|\nabla f_{\sA_{t-1}(\vtheta_{t-1})}\right\|}{\left\|\hat{\rvm}_{t-1}\right\| + \delta_t}}_{T_{3.4}}
        \end{split}
    \end{equation}

    Since we sample each mini-batch independently, for $T_{2.1}$ in~\Eqref{eq:lem_3_des_ine} we have
    \begin{equation}
        \label{eq:lem_3_T_3_1}
        \begin{split}
            T_{3.1} & = \frac{\left\|\nabla f_{\sA_t}(\vtheta_t) - \nabla f(\vtheta_t)\right\|}{\left\|\hat{\rvm}_{t-1}\right\| + \delta_t} \mathop{\le}^{\mathcircled1} \frac{\sigma}{\left\|\hat{\rvm}_{t-1}\right\| + \delta_t} \mathop{\le}^{\mathcircled2} 1,
        \end{split}
    \end{equation}
    where $\mathcircled1$ and $\mathcircled2$ establish because of Lemma~\ref{lem:mini_batch_variance_bound} and the known condition $\delta_t > \sigma$.

    With Assumption~4.1, for $T_{2.2}$ we have
    \begin{equation}
        \label{eq:lem_3_T_3_2}
        \begin{split}
            T_{3.2} & = \frac{\left\|\nabla f(\vtheta_t)- \nabla f(\vtheta_{t-1})\right\|}{\left\|\hat{\rvm}_{t-1}\right\| + \delta_t} \le  \frac{L\left\|\vtheta_t- \vtheta_{t-1}\right\|}{\left\|\hat{\rvm}_{t-1}\right\| + \delta_t} \le \frac{L\left\|\alpha_{t-1}\hat{\rvm}_{t-1}\right\|}{\left\|\hat{\rvm}_{t-1}\right\| + \delta_t}\le L\alpha_{t-1}.
        \end{split}
    \end{equation}

    Similar to~\Eqref{eq:lem_3_T_3_1}, for $T_{3.3}$, there is $T_{3.3}\le 1$.

    For $T_{3.4}$, we utilize Lemma~\ref{lem:acutum_bounded} to bound as follows:
    \begin{equation}
        \label{eq:lem_3_T_3_4}
        \begin{split}
            T_{3.4} = \frac{\left\|\rvg_{t-1}\right\|}{\left\|\hat{\rvm}_{t-1}\right\| + \delta_t} \le \frac{\left\|\rvg_{t-1}\right\|}{\left\|\hat{\rvm}_{t-1}\right\|} \le \frac{1}{1-\beta_{t-1}}.
        \end{split}
    \end{equation}
    Thus, we can submit~\Eqref{eq:lem_3_T_3_1},~\Eqref{eq:lem_3_T_3_2},~\Eqref{eq:lem_3_T_3_4} back into~\Eqref{eq:lem_3_des_ine} to complete such proof.
\end{proof}
\begin{corollary}
    \label{cor:iter_con_bound}
    If we set $\delta_t>\sigma$, $\alpha_t\le L^{-1}$ for any iteration $1\le t\le T$, we have
    \begin{equation*}
        \frac{\left\|\rvg_t\right\|}{\left\|\hat{\rvm}_{t-1}\right\| + \delta_t}\le 3+\frac{1}{1-\beta_{t-1}}.
    \end{equation*}
    Besides, if $\beta_t\le \frac{1}{50}$ for any $1\le t\le T$  holds, we obtain
    \begin{equation}
        \label{eq:hat_beta_definition}
        \begin{split}
            \hat{\beta}_t:= \beta_t \cdot \frac{\left\|\rvg_t\right\|}{\left\|\hat{\rvm}_{t-1}\right\| + \delta_t} \le \frac{1}{12}.
        \end{split}
    \end{equation} 
\end{corollary}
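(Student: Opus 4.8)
The plan is to obtain both inequalities directly from the Lemma immediately preceding this Corollary, which under the assumptions of Section 4 together with $\delta_t \ge \sigma$ already gives $\frac{\|\rvg_t\|}{\|\hat{\rvm}_{t-1}\|+\delta_t} \le 2 + L\alpha_{t-1} + \frac{1}{1-\beta_{t-1}}$. First note that the Corollary assumes $\delta_t > \sigma$, which is stronger than the Lemma's hypothesis $\delta_t \ge \sigma$, so that Lemma applies verbatim. I would then plug in the Corollary's hypothesis $\alpha_{t-1}\le L^{-1}$, so that $L\alpha_{t-1}\le 1$ and the right-hand side is bounded by $3 + \frac{1}{1-\beta_{t-1}}$; this is exactly the first displayed inequality.

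For the bound on $\hat{\beta}_t$, I would multiply the first inequality by $\beta_t$, obtaining $\hat{\beta}_t = \beta_t\cdot\frac{\|\rvg_t\|}{\|\hat{\rvm}_{t-1}\|+\delta_t} \le \beta_t\left(3 + \frac{1}{1-\beta_{t-1}}\right)$, and then use the hypothesis $\beta_t\le\frac{1}{50}$ in two places: once as the outer prefactor, and once inside the last term, where $\beta_{t-1}\le\frac{1}{50}$ gives $\frac{1}{1-\beta_{t-1}}\le\frac{50}{49}$. What remains is an arithmetic check: $3+\frac{50}{49}<\frac{25}{6}$, hence $\hat{\beta}_t\le\frac{1}{50}\cdot\frac{25}{6}=\frac{1}{12}$.

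Since the whole argument reduces to citing the previous Lemma and performing two substitutions, there is no real analytic obstacle; the only point that warrants care is the final numerical estimate, namely that the exact bound $\frac{1}{50}\left(3+\frac{50}{49}\right)=\frac{197}{2450}$ does not exceed $\frac{1}{12}$. Using the slightly coarser inequality $3+\frac{50}{49}<\frac{25}{6}$ makes this transparent and avoids comparing the two fractions directly.
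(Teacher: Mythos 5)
Your proof is correct and follows the same route the paper intends: the first inequality is the preceding Lemma with $L\alpha_{t-1}\le 1$ substituted, and the second follows by multiplying by $\beta_t\le\tfrac{1}{50}$ and using $\beta_{t-1}\le\tfrac{1}{50}$ to bound $\tfrac{1}{1-\beta_{t-1}}\le\tfrac{50}{49}$. Your final numerical check that $\tfrac{1}{50}\bigl(3+\tfrac{50}{49}\bigr)=\tfrac{197}{2450}<\tfrac{1}{12}$ is accurate, and the coarsening $3+\tfrac{50}{49}<\tfrac{25}{6}$ is a clean way to make it transparent.
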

\begin{lemma}
    \label{lem:constructed_sequence}
    In Algorithm 1, with the hyper-parameters settings as Corollary~\ref{cor:iter_con_bound}, and $\vtheta_0 = \vtheta_1$, for the sequence $\left\{\vtheta^\prime_i\right\}$ which is defined as
    \begin{equation*}
        \vtheta^\prime_i = \vtheta_i+\left(\frac{6}{\sqrt{i+1}}+1\right)\cdot \frac{\hat{\beta}_i}{\sqrt{\frac{i}{i-1}}-\hat{\beta}_i} (\vtheta_i-\vtheta_{i-1}),\ \forall\ i\ge 2,
    \end{equation*}
    the following holds true
    \begin{equation}
        \label{eq:constructed_sequence_itration}
        \begin{split}
            \vtheta_{i+1}^\prime= &\vtheta_i^\prime - \alpha_i\cdot\frac{\sqrt{\frac{1}{i}}+1}{1-\hat{\beta}_i} \cdot \rvg_i -\left(\alpha_i\cdot \frac{\sqrt{\frac{1}{i}}+1}{1-\hat{\beta}_i}\cdot \hat{\beta}_i-\alpha_{i-1}\left(\frac{6}{\sqrt{i+1}}+1\right)\cdot \frac{\hat{\beta}_i}{\sqrt{\frac{i}{i-1}}-\hat{\beta}_i}\right)\cdot \hat{\rvm}_{t-1}\\
            & + \left[\left(\frac{6}{\sqrt{i+2}}+1\right)\cdot \frac{\hat{\beta}_{i+1}}{\sqrt{\frac{i+1}{i}}-\hat{\beta}_{i+1}} - \left(\frac{\sqrt{\frac{1}{i}}+1}{1-\hat{\beta}_i} - 1\right)\right]\cdot \left(\vtheta_{i+1}-\vtheta_i\right)
        \end{split}
    \end{equation}
    when $t\ge 2$. Besides, we have
    \begin{equation}
        \label{eq:constructed_sequence_t2}
        \begin{split}
            \vtheta_2^\prime - \vtheta_1^\prime = -\alpha_1\cdot \frac{\sqrt{2}+1}{1-\hat{\beta_1}} \cdot \rvg_1 + \left[\left(\frac{6}{\sqrt{3}}+1\right)\cdot \frac{\hat{\beta}_2}{\sqrt{2}-\hat{\beta}_2}- \frac{\sqrt{2}+1}{1-\hat{\beta_1}}+1\right](\vtheta_2-\vtheta_1)
        \end{split}
    \end{equation}
\end{lemma}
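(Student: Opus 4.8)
The two displayed identities are exact algebraic consequences of the update rules of Algorithm~\ref{alg:acutum}, so the plan is pure substitution and collection of terms; no inequalities or probabilistic input enter. I would first record the relations used throughout, namely $\vtheta_{i+1}-\vtheta_i=-\alpha_i\hat{\rvm}_i$ and $\hat{\rvm}_i=\rvg_i+\hat\beta_i\hat{\rvm}_{i-1}$, and introduce the shorthand $a_i:=\bigl(\tfrac{6}{\sqrt{i+1}}+1\bigr)\tfrac{\hat\beta_i}{\sqrt{i/(i-1)}-\hat\beta_i}$, so that $\vtheta_i^\prime=\vtheta_i+a_i(\vtheta_i-\vtheta_{i-1})$ for $i\ge 2$ while $\vtheta_1^\prime=\vtheta_1$ (the natural convention under the hypothesis $\vtheta_0=\vtheta_1$). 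I would also observe that $a_i$ is well defined, since Corollary~\ref{cor:iter_con_bound} gives $\hat\beta_i\le\tfrac1{12}<1\le\sqrt{i/(i-1)}$, so its denominator is strictly positive.

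The heart of the proof is the trivial identity $\vtheta_{i+1}^\prime-\vtheta_i^\prime=(1+a_{i+1})(\vtheta_{i+1}-\vtheta_i)-a_i(\vtheta_i-\vtheta_{i-1})$, obtained by unfolding the definition of $\vtheta^\prime$; the remaining work is to rewrite its right-hand side in the target form. Let $\mu_i$ denote the coefficient multiplying $-\alpha_i\rvg_i$ on the right-hand side of the identity to be proved (so $\mu_i=\tfrac{\sqrt{1/i}+1}{1-\hat\beta_i}$ in \eqref{eq:constructed_sequence_itration} for $i\ge2$, and $\mu_1=\tfrac{\sqrt2+1}{1-\hat\beta_1}$ in \eqref{eq:constructed_sequence_t2}). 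For $i\ge 2$ I would substitute $\vtheta_i-\vtheta_{i-1}=-\alpha_{i-1}\hat{\rvm}_{i-1}$ in the last term, split $(1+a_{i+1})(\vtheta_{i+1}-\vtheta_i)$ as $\mu_i(\vtheta_{i+1}-\vtheta_i)+(1+a_{i+1}-\mu_i)(\vtheta_{i+1}-\vtheta_i)$, leave the second summand as the $(\vtheta_{i+1}-\vtheta_i)$ term, and expand the first via $\vtheta_{i+1}-\vtheta_i=-\alpha_i\hat{\rvm}_i=-\alpha_i\rvg_i-\alpha_i\hat\beta_i\hat{\rvm}_{i-1}$. Collecting, the $\rvg_i$ coefficient is $-\alpha_i\mu_i$, the $(\vtheta_{i+1}-\vtheta_i)$ coefficient $1+a_{i+1}-\mu_i$ equals the bracketed expression of \eqref{eq:constructed_sequence_itration} once $a_{i+1}$ is written out, and the $\hat{\rvm}_{i-1}$ coefficient is $a_i\alpha_{i-1}-\mu_i\alpha_i\hat\beta_i=-\bigl(\alpha_i\mu_i\hat\beta_i-\alpha_{i-1}a_i\bigr)$, matching \eqref{eq:constructed_sequence_itration}. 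For $i=1$ I would instead use $\hat{\rvm}_0=\vzero$, giving $\hat{\rvm}_1=\rvg_1$ and $\vtheta_2-\vtheta_1=-\alpha_1\rvg_1$; the trivial identity then reads $\vtheta_2^\prime-\vtheta_1^\prime=(1+a_2)(\vtheta_2-\vtheta_1)$, and peeling off $\mu_1$ copies of $(\vtheta_2-\vtheta_1)$ and expanding them through $\vtheta_2-\vtheta_1=-\alpha_1\rvg_1$ yields the decomposed form \eqref{eq:constructed_sequence_t2}; there is no analogue of the middle term because $\hat{\rvm}_0=\vzero$.

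Since every manipulation is an equality, there is no genuine obstacle; the argument is mechanical. The only things needing care are the bookkeeping of the various $\sqrt{i/(i-1)}$, $\sqrt{(i+1)/i}$ and $\sqrt{1/i}$ factors, correctly tracking the index shift between $a_i$ and $a_{i+1}$ on passing from step $i$ to step $i+1$, and checking once (via Corollary~\ref{cor:iter_con_bound}) that no denominator vanishes.
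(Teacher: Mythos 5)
Your proposal is correct and follows essentially the same route as the paper: both proofs reduce to substituting the update relations $\vtheta_{i+1}-\vtheta_i=-\alpha_i\hat{\rvm}_i$, $\hat{\rvm}_i=\rvg_i+\hat\beta_i\hat{\rvm}_{i-1}$, $\vtheta_i-\vtheta_{i-1}=-\alpha_{i-1}\hat{\rvm}_{i-1}$ and splitting the coefficient of $\vtheta_{i+1}-\vtheta_i$ by adding and subtracting $\mu_i$. Your organization is slightly cleaner — you start from the transparent identity $\vtheta_{i+1}'-\vtheta_i'=(1+a_{i+1})(\vtheta_{i+1}-\vtheta_i)-a_i(\vtheta_i-\vtheta_{i-1})$ rather than the paper's more roundabout rewriting of $\mu_i(\vtheta_{i+1}-\vtheta_i)$ in two ways — but the algebra, the $i=1$ base case via $\hat{\rvm}_0=\vzero$, and the denominator check against Corollary~\ref{cor:iter_con_bound} all coincide with the paper's argument.
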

\begin{proof}
    We first rearrange the coefficients of $\vtheta_i$'s iteration in the following
    \begin{equation}
        \label{eq:coef_rearrange}
        \begin{split}
            \frac{\sqrt{\frac{1}{i}}+1}{1-\hat{\beta}_i} \cdot \left(\vtheta_{i+1}-\vtheta_i\right) = \vtheta_{i+1}+\left(\frac{\sqrt{\frac{1}{i}}+1}{1-\hat{\beta}_i} - 1\right)\left(\vtheta_{i+1}-\vtheta_i\right) - \vtheta_{i}
        \end{split}
    \end{equation}
    According to the update rule of parameters, we obtain
    \begin{equation}
        \label{eq:iteration_rearrange}
        \begin{split}
            \frac{\sqrt{\frac{1}{i}}+1}{1-\hat{\beta}_i} \cdot \left(\vtheta_{i+1}-\vtheta_i\right) &= \frac{\sqrt{\frac{1}{i}}+1}{1-\hat{\beta}_i} \cdot \left(-\alpha_i\hat{\rvm}_{i}\right) = -\alpha_i \cdot \frac{\sqrt{\frac{1}{i}}+1}{1-\hat{\beta}_i} \cdot \left(\rvg_i+\hat{\beta}_i\hat{\rvm}_{i-1}\right)\\   
            = &-\alpha_i \cdot \frac{\sqrt{\frac{1}{i}}+1}{1-\hat{\beta}_i} \cdot \rvg_i + \frac{\alpha_i \cdot \frac{\sqrt{\frac{1}{i}}+1}{1-\hat{\beta}_i}}{\alpha_{i-1}}\cdot \hat{\beta}_i\cdot \left(\vtheta_{i}-\vtheta_{i-1}\right)\\
            = &\left(\frac{6}{\sqrt{i+1}}+1\right)\cdot \frac{\hat{\beta}_i}{\sqrt{\frac{i}{i-1}}-\hat{\beta}_i}\cdot \left(\vtheta_i - \vtheta_{i-1}\right)-\alpha_i \cdot \frac{\sqrt{\frac{1}{i}}+1}{1-\hat{\beta}_i} \cdot \rvg_i\\
            & +\left(\frac{\alpha_i\cdot \frac{\sqrt{\frac{1}{i}}+1}{1-\hat{\beta}_i}\cdot \hat{\beta}_i}{\alpha_{i-1}}-\left(\frac{6}{\sqrt{i+1}}+1\right)\cdot \frac{\hat{\beta}_i}{\sqrt{\frac{i}{i-1}}-\hat{\beta}_i}\right)\cdot \left(\vtheta_i-\vtheta_{i-1}\right)
        \end{split}
    \end{equation}
    Combine~\Eqref{eq:coef_rearrange} and \Eqref{eq:iteration_rearrange}, when $i\ge 2$, we obtain the following inequalities
    \begin{equation}
        \label{eq:temp_ine}
        \begin{split}
            & \vtheta_{i+1}+\left(\frac{\sqrt{\frac{1}{i}}+1}{1-\hat{\beta}_i} - 1\right)\left(\vtheta_{i+1}-\vtheta_i\right) - \vtheta_{i}\\
            =& \left(\frac{6}{\sqrt{i+1}}+1\right)\cdot \frac{\hat{\beta}_i}{\sqrt{\frac{i}{i-1}}-\hat{\beta}_i}\cdot \left(\vtheta_i - \vtheta_{i-1}\right)-\alpha_i \cdot \frac{\sqrt{\frac{1}{i}}+1}{1-\hat{\beta}_i} \cdot \rvg_i\\
            &+\left(\frac{\alpha_i\cdot \frac{\sqrt{\frac{1}{i}}+1}{1-\hat{\beta}_i}\cdot \hat{\beta}_i}{\alpha_{i-1}}-\left(\frac{6}{\sqrt{i+1}}+1\right)\cdot \frac{\hat{\beta}_i}{\sqrt{\frac{i}{i-1}}-\hat{\beta}_i}\right)\cdot \left(\vtheta_i-\vtheta_{i-1}\right)
        \end{split}
    \end{equation}
    Rearrange terms in~\Eqref{eq:temp_ine}, we have
    \begin{equation}
        \label{eq:iteration_equation}
        \begin{split}
            &\vtheta_{i+1} + \left(\frac{\sqrt{\frac{1}{i}}+1}{1-\hat{\beta}_i} - 1\right)\left(\vtheta_{i+1}-\vtheta_i\right)\\
            =& \vtheta_i + \left(\frac{6}{\sqrt{i+1}}+1\right)\cdot \frac{\hat{\beta}_i}{\sqrt{\frac{i}{i-1}}-\hat{\beta}_i}\cdot \left(\vtheta_i - \vtheta_{i-1}\right)\\
            & - \alpha_i\cdot\frac{\sqrt{\frac{1}{i}}+1}{1-\hat{\beta}_i} \cdot \rvg_i -\left(\alpha_i\cdot \frac{\sqrt{\frac{1}{i}}+1}{1-\hat{\beta}_i}\cdot \hat{\beta}_i-\alpha_{i-1}\left(\frac{6}{\sqrt{i+1}}+1\right)\cdot \frac{\hat{\beta}_i}{\sqrt{\frac{i}{i-1}}-\hat{\beta}_i}\right)\cdot \hat{\rvm}_{t-1}
        \end{split}
    \end{equation}
    Thus, if we denote $\vtheta_i^\prime$ as
    \begin{equation*}
        \vtheta^\prime_i = \vtheta_i+\left(\frac{6}{\sqrt{i+1}}+1\right)\cdot \frac{\hat{\beta}_i}{\sqrt{\frac{i}{i-1}}-\hat{\beta}_i} (\vtheta_i-\vtheta_{i-1}),
    \end{equation*}
    we then obtain
    \begin{equation}
        \label{eq:des_equation_ge}
        \begin{split}
            &\vtheta_{i+1}^\prime = \vtheta_{i+1}+\left(\frac{6}{\sqrt{i+2}}+1\right)\cdot \frac{\hat{\beta}_{i+1}}{\sqrt{\frac{i+1}{i}}-\hat{\beta}_{i+1}} (\vtheta_{i+1}-\vtheta_i)\\
            = & \underbrace{\vtheta_{i+1} + \left(\frac{\sqrt{\frac{1}{i}}+1}{1-\hat{\beta}_i} - 1\right)\left(\vtheta_{i+1}-\vtheta_i\right)}_{T_1}\\
            & + \left[\left(\frac{6}{\sqrt{i+2}}+1\right)\cdot \frac{\hat{\beta}_{i+1}}{\sqrt{\frac{i+1}{i}}-\hat{\beta}_{i+1}} - \left(\frac{\sqrt{\frac{1}{i}}+1}{1-\hat{\beta}_i} - 1\right)\right]\cdot \left(\vtheta_{i+1}-\vtheta_i\right).
        \end{split}
    \end{equation}
    Introducing \Eqref{eq:iteration_equation} to $T_{1}$ of \Eqref{eq:des_equation_ge}, we have
    \begin{equation}
        \label{eq:constructed_sequence_final}
        \begin{split}
            \vtheta_{i+1}^\prime =& \vtheta_i + \left(\frac{6}{\sqrt{i+1}}+1\right)\cdot \frac{\hat{\beta}_i}{\sqrt{\frac{i}{i-1}}-\hat{\beta}_i}\cdot \left(\vtheta_i - \vtheta_{i-1}\right)\\
            & - \alpha_i\cdot \frac{\sqrt{\frac{1}{i}}+1}{1-\hat{\beta}_i} \cdot \rvg_i -\left(\alpha_i\cdot \frac{\sqrt{\frac{1}{i}}+1}{1-\hat{\beta}_i}\cdot \hat{\beta}_i-\alpha_{i-1}\left(\frac{6}{\sqrt{i+1}}+1\right)\cdot \frac{\hat{\beta}_i}{\sqrt{\frac{i}{i-1}}-\hat{\beta}_i}\right)\cdot \hat{\rvm}_{i-1}\\
            = &\vtheta_i^\prime - \alpha_i\cdot\frac{\sqrt{\frac{1}{i}}+1}{1-\hat{\beta}_i} \cdot \rvg_i -\left(\alpha_i\cdot \frac{\sqrt{\frac{1}{i}}+1}{1-\hat{\beta}_i}\cdot \hat{\beta}_i-\alpha_{i-1}\left(\frac{6}{\sqrt{i+1}}+1\right)\cdot \frac{\hat{\beta}_i}{\sqrt{\frac{i}{i-1}}-\hat{\beta}_i}\right)\cdot \hat{\rvm}_{t-1}\\
            & + \left[\left(\frac{6}{\sqrt{i+2}}+1\right)\cdot \frac{\hat{\beta}_{i+1}}{\sqrt{\frac{i+1}{i}}-\hat{\beta}_{i+1}} - \left(\frac{\sqrt{\frac{1}{i}}+1}{1-\hat{\beta}_i} - 1\right)\right]\cdot \left(\vtheta_{i+1}-\vtheta_i\right).
        \end{split}
    \end{equation}
    Additionally, when $i=1$ and $\vtheta_1^\prime = \vtheta_1$, we have
    \begin{equation}
        \label{eq:constructed_sequence_final}
        \begin{split}
            \vtheta_2^\prime - \vtheta_1^\prime =& \vtheta_2 + \left(\frac{6}{\sqrt{3}}+1\right)\cdot \frac{\hat{\beta}_2}{\sqrt{2}-\hat{\beta}_2} (\vtheta_2-\vtheta_1)-\vtheta_1\\
            = & \vtheta_2 + \frac{\sqrt{2}+1}{1-\hat{\beta_1}} \cdot\left(\vtheta_2-\vtheta_1\right) + \left[\left(\frac{6}{\sqrt{3}}+1\right)\cdot \frac{\hat{\beta}_2}{\sqrt{2}-\hat{\beta}_2}- \frac{\sqrt{2}+1}{1-\hat{\beta_1}}\right] (\vtheta_2-\vtheta_1)-\vtheta_1\\
            =& -\alpha_1\cdot \frac{\sqrt{2}+1}{1-\hat{\beta_1}} \cdot \rvg_1 + \left[\left(\frac{6}{\sqrt{3}}+1\right)\cdot \frac{\hat{\beta}_2}{\sqrt{2}-\hat{\beta}_2}- \frac{\sqrt{2}+1}{1-\hat{\beta_1}}+1\right](\vtheta_2-\vtheta_1)
        \end{split}
    \end{equation}
    to complete our proof.
\end{proof}

\section{Proofs in Section 4}
In this section, we provide the proofs for theorems and corollaries in Section 4.
\subsection{Proof of Theorem 4.1}
\label{prf:gradient_norm_expectation_bound}
\begin{proof}
    Combining Assumption~4.1 and Lemma~\ref{lem:constructed_sequence}, for the sequence $\left\{\vtheta_t^\prime\right\}$ we have
    \begin{equation}
        \label{eq:z_lipschitz_continuty}
        \begin{split}
             &\mathbb{E}\left[f(\vtheta^\prime_{t+1})-f(\vtheta^\prime_1)\right] \\
            = &\mathbb{E}\left[\sum_{i=1}^t \left(f(\vtheta^\prime_{i+1}) - f(\vtheta^\prime_i)\right)\right]\\
            \le &\mathbb{E}\left[\sum_{i=1}^t  \left(\nabla f(\vtheta^\prime_i)^T\left(\vtheta\prime_{i+1}-\vtheta^\prime_i\right)+\frac{L}{2}\left\|\vtheta^\prime_{i+1}-\vtheta^\prime_i\right\|^2\right)\right]\\
            =&\mathbb{E}\left[\sum_{i=1}^t  \left(\left(\nabla f(\vtheta^\prime_i)- \nabla f(\vtheta_i)\right)^T\left(\vtheta^\prime_{i+1} - \vtheta^\prime_i\right)  + \nabla f(\vtheta_i)^T\left(\vtheta^\prime_{i+1} - \vtheta^\prime_i\right) + \frac{L}{2}\left\|\vtheta^\prime_{i+1} - \vtheta^\prime_i\right\|^2\right)\right]\\
            \le & \mathbb{E}\left[\sum_{i=1}^t\left(\frac{1}{2L}\left\|\nabla f(\vtheta_i^\prime) - \nabla f(\vtheta_i))\right\|^2 + \nabla f(\vtheta_i)^T\left(\vtheta^\prime_{i+1} - \vtheta^\prime_i\right) + L\left\|\vtheta_{i+1}^\prime - \vtheta_i^\prime\right\|^2\right)\right]\\
            \le & \underbrace{\mathbb{E}\left[\sum_{i=1}^t \frac{L}{2}\left\|\vtheta_i^\prime - \vtheta_i\right\|^2\right]}_{T_1} + \underbrace{\mathbb{E}\left[\sum_{i=1}^t \nabla f(\vtheta_i)^T\left(\vtheta_{i+1}^\prime - \vtheta_i\right)\right]}_{T_2} + \underbrace{\mathbb{E}\left[\sum_{i=1}^t L\left\|\vtheta_{i+1}^\prime - \vtheta_{i}^\prime\right\|^2\right]}_{T_3}
        \end{split}
    \end{equation}

    \emph{For $T_1$ in \Eqref{eq:z_lipschitz_continuty}:} we have
    \begin{equation}
        \label{eq:t1_for_z_lipschitz_continuty}
        \begin{split}
            & \mathbb{E}\left[\sum_{i=1}^t \frac{L}{2}\left\|\vtheta_i^\prime - \vtheta_i\right\|^2\right]=\mathbb{E}\left[\sum_{i=2}^t \frac{L}{2}\cdot \left(\frac{6}{\sqrt{i+1}}+1\right)^2 \cdot \left(\frac{\hat{\beta}_i}{\sqrt{\frac{i}{i-1}}-\hat{\beta}_i}\right)^2\cdot\left\|\vtheta_i - \vtheta_{i-1}\right\|^2\right]\\
            \mathop{\le}^{\mathcircled1} & \mathbb{E}\left[\sum_{i=2}^t \frac{L}{2}\left\|\vtheta_{i}-\vtheta_{i-1}\right\|^2\right] = \mathbb{E}\left[\sum_{i=2}^t \frac{L}{2}\cdot \alpha_{i-1}^2\cdot \left\|\hat{\rvm}_{i-1}\right\|^2\right] \mathop{\le}^{\mathcircled2}\mathbb{E}\left[\sum_{i=2}^t \frac{L}{2}\cdot \alpha_{i-1}^2 \cdot \left(1+\beta_{i-1}\right)^2 \cdot \left\|\rvg_{i-1}\right\|^2\right]\\
            \le & \mathbb{E}\left[\sum_{i=2}^t \frac{L}{2}\cdot \alpha_{i-1}^2 \cdot \left(1+\beta_{i-1}\right)^2 \cdot \left\|\rvg_{i-1} - \nabla f(\vtheta_{i-1}) + \nabla f(\vtheta_{i-1})\right\|^2\right]\\
            \le & \mathbb{E}\left[\sum_{i=1}^t \frac{L}{4}\cdot \alpha_{i}^2 \cdot \left(1+\beta_{i}\right)^2 \cdot \sigma^2\right] + \mathbb{E}\left[\sum_{i=1}^t \frac{L}{4}\cdot \alpha_{i}^2 \cdot \left(1+\beta_{i}\right)^2 \cdot \left\|\nabla f(\vtheta_{i})\right\|^2\right]\\
            \le & \frac{L\sigma^2\alpha_0^2}{2}\cdot\left(1+\ln(t)\right) + \mathbb{E}\left[\sum_{i=1}^t \frac{L}{2}\cdot \alpha_{i}^2 \cdot \left\|\nabla f(\vtheta_{i})\right\|^2\right]
        \end{split}
    \end{equation}
    where $\mathcircled1$ and $\mathcircled2$ establish becuase of Corollary~\ref{cor:iter_con_bound} and Lemma~\ref{lem:acutum_bounded}. 

    \emph{For $T_2$ in \Eqref{eq:z_lipschitz_continuty}:} it satisfies
    \begin{equation}
        \label{eq:t2_for_z_lipschitz_continuty}
        \begin{split}
        &\mathbb{E}\left[\sum_{i=1}^t \nabla f(\vtheta_i)^T\left(\vtheta_{i+1}^\prime - \vtheta_i^\prime\right)\right]\\
         = & \underbrace{\mathbb{E}\left[\sum_{i=2}^t \nabla f(\vtheta_i)^T\left(\left(\frac{6}{\sqrt{i+1}}+1\right)\cdot\frac{\alpha_{i-1}\hat{\beta}_i}{\sqrt{\frac{i}{i-1}}-\hat{\beta}_i} - \left(\sqrt{\frac{1}{i}}+1\right)\cdot\frac{\alpha_i\hat{\beta}_i}{1-\hat{\beta}_i}\right)\hat{m}_{i-1}\right]}_{T_{2.1}}\\
        & - \underbrace{\mathbb{E}\left[\sum_{i=1}^t \nabla f(\vtheta_i)^T\left(\left(\sqrt{\frac{1}{i}}+1\right)\cdot\frac{\alpha_i}{1-\hat{\beta}_i}\right) \rvg_i\right]}_{T_{2.2}}\\
        &+\underbrace{\mathbb{E}\left[\sum_{i=1}^t \nabla f(\vtheta_i)^T\left(\left(\frac{6}{\sqrt{i+2}}+1\right)\cdot\frac{\hat{\beta}_{i+1}}{\sqrt{\frac{i+1}{i}}-\hat{\beta}_{i+1}} -\left(\sqrt{\frac{1}{i}}+1\right)\cdot\frac{1}{1-\hat{\beta}_i} +1 \right)\left(\vtheta_i - \vtheta_{i-1}\right)\right]}_{T_{2.3}}
        \end{split}
    \end{equation}
    $T_{2.1}$ can be reformulated as:
    \begin{equation}
        \label{eq:T_2_1_refor}
        \begin{split}
            T_{2.1} =& \mathbb{E}\left[\sum_{i=2}^t \nabla f(\vtheta_i)^T\left(\left(\frac{6}{\sqrt{i+1}}+1\right)\cdot\frac{\alpha_{i-1}\hat{\beta}_i}{\sqrt{\frac{i}{i-1}}-\hat{\beta}_i} - \left(\sqrt{\frac{1}{i}}+1\right)\cdot\frac{\alpha_i\hat{\beta}_i}{1-\hat{\beta}_i}\right)\hat{m}_{i-1}\right]\\
            \mathop{\le}^{\mathcircled1} & \mathbb{E}\left[\sum_{i=2}^t \left[\left(\frac{6}{\sqrt{i+1}}+1\right)\cdot\frac{\alpha_{i-1}\hat{\beta}_i}{\sqrt{\frac{i}{i-1}}-\hat{\beta}_i} - \left(\sqrt{\frac{1}{i}}+1\right)\cdot\frac{\alpha_i\hat{\beta}_i}{1-\hat{\beta}_i}\right]\cdot \left\|\nabla f(\vtheta_i)\right\|\cdot\left\|\hat{m}_{i-1}\right\|\right] \\
            \mathop{\le}^{\mathcircled2} & \mathbb{E}\left[\sum_{i=2}^t \left[\left(\frac{6}{\sqrt{i+1}}+1\right)\cdot\frac{\alpha_{i-1}\hat{\beta}_i}{\sqrt{\frac{i}{i-1}}-\hat{\beta}_i} - \left(\sqrt{\frac{1}{i}}+1\right)\cdot\frac{\alpha_i\hat{\beta}_i}{1-\hat{\beta}_i}\right]\cdot G^2\right]\\
            = & \mathbb{E}\left[\left(\frac{6}{\sqrt{3}}+1\right)\cdot \frac{\alpha_{1}\hat{\beta}_2}{\sqrt{2}-\hat{\beta}_2}\cdot G^2\right] - \mathbb{E}\left[\left(\sqrt{\frac{1}{t}}+1\right)\cdot\frac{\alpha_t\hat{\beta}_t}{1-\hat{\beta}_t}\cdot G^2\right]\\
            & + \underbrace{\mathbb{E}\left[\sum_{i=2}^{t-1}\left[\left(\frac{6}{\sqrt{i+2}}+1\right)\cdot\frac{\alpha_i\hat{\beta}_{i+1}}{\sqrt{\frac{i+1}{i}}-\hat{\beta}_{i+1}} - \left(\frac{1}{\sqrt{i}}+1\right)\cdot \frac{\alpha_i\hat{\beta}_i}{1-\hat{\beta}_i}\right]\cdot G^2\right]}_{T_{2.1.1}}
        \end{split}
    \end{equation}
    where $\mathcircled1$ and $\mathcircled2$ establish becuase of Corollary~\ref{cor:T_2_1_no_negative_property} and gradient norm bounded assumption when $\alpha = \frac{\alpha_0}{\sqrt{t}}$. With the same selection of hyper-parameters, i.e., $\alpha_t$, we have $T_{2.1.1}$ satisfies
    \begin{equation}
        \label{eq:lemma_10_app}
        \begin{split}
            \left[\left(\frac{6}{\sqrt{i+2}}+1\right)\cdot\frac{\alpha_i\hat{\beta}_{i+1}}{\sqrt{\frac{i+1}{i}}-\hat{\beta}_{i+1}} - \left(\frac{1}{\sqrt{i}}+1\right)\cdot \frac{\alpha_i\hat{\beta}_i}{1-\hat{\beta}_i}\right]\le \frac{\alpha_0}{i}
        \end{split}
    \end{equation}
    due to~Lemma~\ref{lem:T_2_1_telescoping_sum_property}. Thus, submitting~\Eqref{eq:lemma_10_app} back into~\Eqref{eq:T_2_1_refor}, we have
    \begin{equation}
        \label{eq:T_2_1_final}
        \begin{split}
            T_{2.1}\le &\mathbb{E}\left[\left(\frac{6}{\sqrt{3}}+1\right)\cdot \frac{\alpha_{1}\hat{\beta}_2}{\sqrt{2}-\hat{\beta}_2}\cdot G^2\right] - \mathbb{E}\left[\left(\sqrt{\frac{1}{t}}+1\right)\cdot\frac{\alpha_t\hat{\beta}_t}{1-\hat{\beta}_t}\cdot G^2\right] + \mathbb{E}\left[\sum_{i=2}^{t-1}\frac{\alpha_0}{i}\right]\\
            \mathop{\le}^{\mathcircled1} & \mathbb{E}\left[\sum_{i=1}^{t-1}\frac{\alpha_0}{i}\right] \le \alpha_0\left(\ln(t)+1\right).
        \end{split}
    \end{equation}
    where we have $\mathcircled1$ becuase $\hat{\beta}_i\le \frac{1}{12}$ got from Corollary~\ref{cor:iter_con_bound}. Then we investigate $T_{2.3}$ which can be reformulated as
    \begin{equation}
        \label{eq:T_2_3_refor}
        \begin{split}
            T_{2.3} =&\mathbb{E}\left[\sum_{i=1}^t \nabla f(\vtheta_i)^T\left(\left(\frac{6}{\sqrt{i+2}}+1\right)\cdot\frac{\hat{\beta}_{i+1}}{\sqrt{\frac{i+1}{i}}-\hat{\beta}_{i+1}} -\left(\sqrt{\frac{1}{i}}+1\right)\cdot\frac{1}{1-\hat{\beta}_i} +1 \right)\left(\vtheta_i - \vtheta_{i-1}\right)\right]\\
            = &\mathbb{E}\left[\sum_{i=1}^t \nabla f(\vtheta_i)^T\left(\left(\frac{6}{\sqrt{i+2}}+1\right)\cdot\frac{\hat{\beta}_{i+1}}{\sqrt{\frac{i+1}{i}}-\hat{\beta}_{i+1}} -\left(\sqrt{\frac{1}{i}}+1\right)\cdot\frac{1}{1-\hat{\beta}_i} +1 \right)\cdot \left(-\alpha_{i-1}\hat{\rvm}_{i-1}\right)\right]\\
            \mathop{\le}^{\mathcircled1} & \underbrace{\mathbb{E}\left[\sum_{i=1}^t\frac{2\left(1-\hat{\beta}_i\right)}{\sqrt{\frac{1}{i}}+1}\cdot \frac{\alpha_{i-1}^2}{\alpha_i}\left|\left(\frac{6}{\sqrt{i+2}}+1\right)\cdot\frac{\hat{\beta}_{i+1}}{\sqrt{\frac{i+1}{i}}-\hat{\beta}_{i+1}} -\left(\sqrt{\frac{1}{i}}+1\right)\cdot\frac{1}{1-\hat{\beta}_i} +1 \right|^2\cdot \left\|\hat{\rvm}_{i-1}\right\|^2\right]}_{T_{2.3.1}}\\
            &+ \mathbb{E}\left[\sum_{i=1}^t\frac{1}{8}\cdot \left(\sqrt{\frac{1}{i}}+1\right)\cdot \frac{\alpha_i}{1-\hat{\beta}_i}\cdot\left\|\nabla f(\vtheta_i)\right\|^2\right],
        \end{split}
    \end{equation}
    where we have $\mathcircled1$ becuase of the Cauthy-Schwarz inequality. With the hyper-parameters selection, for any $i\ge 1$, we have
    \begin{equation}
        \label{eq:T_2_3_constant_upper_bound}
        \begin{split}
            & \left|\left(\frac{6}{\sqrt{i+2}}+1\right)\cdot\frac{\hat{\beta}_{i+1}}{\sqrt{\frac{i+1}{i}}-\hat{\beta}_{i+1}} -\left(\sqrt{\frac{1}{i}}+1\right)\cdot\frac{1}{1-\hat{\beta}_i} +1 \right|\\
            \le & \left|\left(\frac{6}{\sqrt{i+2}}+1\right)\cdot\frac{\hat{\beta}_{i+1}}{\sqrt{\frac{i+1}{i}}-\hat{\beta}_{i+1}}\right|+ \left|\left(\sqrt{\frac{1}{i}}+1\right)\cdot\frac{1}{1-\hat{\beta}_i}\right| + 1 \le 5
        \end{split}
    \end{equation}
    Then, submitting~\Eqref{eq:T_2_3_constant_upper_bound} back into $T_{2.3.1}$ of~\Eqref{eq:T_2_3_refor}, we can obtain
    \begin{equation}
        \label{eq:T_2_3_1_relaxiation}
        \begin{split}
        T_{2.3.1}\le &\mathbb{E}\left[\sum_{i=1}^t \frac{2\left(1-\hat{\beta}_i\right)}{\sqrt{\frac{1}{i}}+1}\cdot \frac{\alpha_{i-1}^2}{\alpha_i}\cdot 5 \cdot \left|\left(\frac{6}{\sqrt{i+2}}+1\right)\cdot\frac{\hat{\beta}_{i+1}}{\sqrt{\frac{i+1}{i}}-\hat{\beta}_{i+1}}\right.\right.\\ 
        &\left.\left.-\left(\sqrt{\frac{1}{i}}+1\right)\cdot\frac{1}{1-\hat{\beta}_i} +1 \right|\cdot \left\|\hat{\rvm}_{i-1}\right\|^2\right]\\
        \mathop{\le}^{\mathcircled1} & \mathbb{E}\left[\sum_{i=1}^t\frac{20\alpha_0\left(1-\hat{\beta}_i\right)}{\sqrt{\frac{1}{i}}+1}\cdot \sqrt{\frac{1}{i}}\cdot \left|\left(\frac{6}{\sqrt{i+2}}+1\right)\cdot\frac{\hat{\beta}_{i+1}}{\sqrt{\frac{i+1}{i}}-\hat{\beta}_{i+1}}-\left(\sqrt{\frac{1}{i}}+1\right)\cdot\frac{1}{1-\hat{\beta}_i} +1 \right|\cdot G^2 \right]\\
        \mathop{\le}^{\mathcircled2} &\mathbb{E}\left[\sum_{i=1}^t\frac{20\alpha_0\left(1-\hat{\beta}_i\right)}{\sqrt{\frac{1}{i}}+1}\cdot \sqrt{\frac{1}{i}}\cdot \left[\left(\sqrt{\frac{1}{i}}+1\right)\cdot\frac{1}{1-\hat{\beta}_i} - \left(\frac{6}{\sqrt{i+2}}+1\right)\cdot\frac{\hat{\beta}_{i+1}}{\sqrt{\frac{i+1}{i}}-\hat{\beta}_{i+1}}-1 \right]\cdot G^2\right]\\
        \le & \mathbb{E}\left[10\alpha_0G^2\sum_{i=1}^t\sqrt{\frac{1}{i}}\cdot \left[\left(\sqrt{\frac{1}{i}}+1\right)\cdot\frac{1}{1-\hat{\beta}_i} - \left(\frac{6}{\sqrt{i+2}}+1\right)\cdot\frac{\hat{\beta}_{i+1}}{\sqrt{\frac{i+1}{i}}-\hat{\beta}_{i+1}}-1 \right]\right]
        \end{split}
    \end{equation}
    where $\mathcircled1$ and $\mathcircled2$ establish becuase of $\alpha_i = \frac{\alpha_0}{\sqrt{i}}$, the gradient norm bounded assumption and Corollary~\ref{cor:T_2_3_no_negative_property}. Then we can rearrange terms of \Eqref{eq:T_2_3_1_relaxiation} as follows
    \begin{equation}
        \label{eq:T_2_3_1_rearr}
        \begin{split}
            T_{2.3.1}\le &10\alpha_0G^2\cdot \mathbb{E}\left[\frac{2}{1-\hat{\beta}_i} - 1\right] +  10\alpha_0G^2\cdot \mathbb{E}\left[\sum_{i=1}^{t-1} \sqrt{\frac{1}{i+1}}\cdot \left(\frac{\sqrt{\frac{1}{i+1}}+1}{1-\hat{\beta}_{i+1}}-1\right) \right.\\
            & \left.-\sqrt{\frac{1}{i}}\cdot\left(\frac{6}{\sqrt{i+2}}+1\right)\cdot\frac{\hat{\beta}_{i+1}}{\sqrt{\frac{i+1}{i}}-\hat{\beta}_{i+1}}\right]\\
            \le &10\alpha_0G^2\cdot \mathbb{E}\left[\frac{2}{1-\hat{\beta}_i} - 1\right] +  10\alpha_0G^2\cdot \mathbb{E}\left[\sum_{i=1}^{t-1}\sqrt{\frac{1}{i}}\cdot\left(\frac{\sqrt{\frac{1}{i+1}}+1}{1-\hat{\beta}_{i+1}}-1 \right.\right.\\
            & \left.\left.-\left(\frac{6}{\sqrt{i+2}}+1\right)\cdot\frac{\hat{\beta}_{i+1}}{\sqrt{\frac{i+1}{i}}-\hat{\beta}_{i+1}} \right)\right]\\
            \mathop{\le}^{\mathcircled1} & 10\alpha_0G^2\cdot \mathbb{E}\left[\frac{2}{1-\hat{\beta}_i} - 1\right] + 10\alpha_0G^2\cdot \mathbb{E}\left[\sum_{i=1}^{t-1}\sqrt{\frac{1}{i}}\cdot \frac{6}{\sqrt{i+2}}\right]\\
            \le & 10\alpha_0G^2\cdot(7+6\ln(t))
        \end{split}
    \end{equation}
    where we have $\mathcircled1$ according to Lemma~\ref{lem:T_2_3_telescoping_sum_property}. Then, we submit \Eqref{eq:T_2_3_1_rearr} back to \Eqref{eq:T_2_3_refor} to have
    \begin{equation}
        \label{eq:T_2_3_final}
        \begin{split}
            T_{2.3}\le \mathbb{E}\left[\frac{1}{8}\cdot \left(\sqrt{\frac{1}{i}}+1\right)\cdot \frac{\alpha_i}{1-\hat{\beta}_i}\cdot \left\|\nabla f(\vtheta_i)\right\|^2\right] + 10\alpha_0G^2\cdot(7+6\ln(t)).
        \end{split}
    \end{equation}
    Thus, we submit \Eqref{eq:T_2_1_final} and \Eqref{eq:T_2_3_final} back into \Eqref{eq:t2_for_z_lipschitz_continuty} to have
    \begin{equation}
        \label{eq:t2_final}
        \begin{split}
            &\mathbb{E}\left[\sum_{i=1}^t \nabla f(\vtheta_i)^T\left(\vtheta_{i+1}^\prime - \vtheta_i\right)\right]\\
            \le &\alpha_0\left(\ln(t)+1\right) - \mathbb{E}\left[\sum_{i=1}^t \nabla f(\vtheta_i)^T\left(\left(\sqrt{\frac{1}{i}}+1\right)\cdot \frac{\alpha_i}{1-\hat{\beta}_i}\right)\rvg_i\right]\\
            &+\mathbb{E}\left[\frac{1}{8}\cdot \left(\sqrt{\frac{1}{i}}+1\right)\cdot \frac{\alpha_i}{1-\hat{\beta}_i}\cdot \left\|\nabla f(\vtheta_i)\right\|^2\right] + 10\alpha_0G^2\cdot(7+6\ln(t))\\
            \le & \left(60G^2+1\right)\alpha_0\ln(t) + \left(70G^2+1\right)\alpha_0- \mathbb{E}\left[\frac{7}{8}\sum_{i=1}^t \alpha_i\left\|\nabla f(\vtheta_i)\right\|^2\right]
        \end{split}
    \end{equation}

    \emph{For $T_3$ in \Eqref{eq:z_lipschitz_continuty}:} we obtain that
    \begin{equation}
        \label{eq:t3_for_z_lipschitz_continuty}
        \begin{split}
            & \mathbb{E}\left[\sum_{i=1}^t L\left\|\vtheta_{i+1}^\prime - \vtheta_{i}^\prime\right\|^2\right]\\
            = &\mathbb{E}\left[\sum_{i=2}^t\left\|- \alpha_i\cdot\frac{\sqrt{\frac{1}{i}}+1}{1-\hat{\beta}_i} \cdot \rvg_i -\left(\alpha_i\cdot \frac{\sqrt{\frac{1}{i}}+1}{1-\hat{\beta}_i}\cdot \hat{\beta}_i-\alpha_{i-1}\left(\frac{6}{\sqrt{i+1}}+1\right)\cdot \frac{\hat{\beta}_i}{\sqrt{\frac{i}{i-1}}-\hat{\beta}_i}\right)\cdot \hat{\rvm}_{i-1} \right.\right.\\
            &\left.\left.+ \left[\left(\frac{6}{\sqrt{i+2}}+1\right)\cdot \frac{\hat{\beta}_{i+1}}{\sqrt{\frac{i+1}{i}}-\hat{\beta}_{i+1}} - \left(\frac{\sqrt{\frac{1}{i}}+1}{1-\hat{\beta}_i} - 1\right)\right]\cdot \left(\vtheta_{i+1}-\vtheta_i\right)\right\|^2\right]\\
            & + \mathbb{E}\left[\left\|-\alpha_1\cdot \frac{\sqrt{2}+1}{1-\hat{\beta_1}} \cdot \rvg_1 + \left[\left(\frac{6}{\sqrt{3}}+1\right)\cdot \frac{\hat{\beta}_2}{\sqrt{2}-\hat{\beta}_2}- \frac{\sqrt{2}+1}{1-\hat{\beta_1}}+1\right](\vtheta_2-\vtheta_1)\right\|^2\right]\\
            \le & \underbrace{\mathbb{E}\left[\sum_{i=1}^t 3\cdot\left\|\alpha_i\cdot\frac{\sqrt{\frac{1}{i}}+1}{1-\hat{\beta}_i} \cdot \rvg_i\right\|^2\right]}_{T_{3.1}}\\
            & + \underbrace{\sum_{i=1}^t \mathbb{E}\left[3\cdot\left\|\left[\left(\frac{6}{\sqrt{i+2}}+1\right)\cdot \frac{\hat{\beta}_{i+1}}{\sqrt{\frac{i+1}{i}}-\hat{\beta}_{i+1}} - \left(\frac{\sqrt{\frac{1}{i}}+1}{1-\hat{\beta}_i} - 1\right)\right]\cdot \left(\vtheta_{i+1}-\vtheta_i\right)\right\|^2\right]}_{T_{3.2}}\\
            & + \underbrace{\mathbb{E}\left[\sum_{i=2}^t 3\cdot\left\|\left(\left(\frac{6}{\sqrt{i+1}}+1\right)\cdot\frac{\alpha_{i-1}\hat{\beta}_i}{\sqrt{\frac{i}{i-1}}-\hat{\beta}_i} - \left(\sqrt{\frac{1}{i}}+1\right)\cdot\frac{\alpha_i\hat{\beta}_i}{1-\hat{\beta}_i}\right)\hat{m}_{i-1}\right\|^2\right]}_{T_{3.3}},
        \end{split}
    \end{equation}
    according to the fact $\left\|p+q+r\right\|^2\le 3\left(\left\|p\right\|^2 + \left\|q\right\|^2 + \left\|r\right\|^2\right),\ \forall\ p,q,r\in \mathbb{R}^d$. Thus, for $T_{3.1}$ of \Eqref{eq:t3_for_z_lipschitz_continuty}, we find
    \begin{equation}
        \label{eq:T_3_1_final}
        \begin{split}
            T_{3.1}=& \mathbb{E}\left[\sum_{i=1}^t 3\cdot\left\|\alpha_i\cdot\frac{\sqrt{\frac{1}{i}}+1}{1-\hat{\beta}_i} \cdot \rvg_i\right\|^2\right] \le \mathbb{E}\left[\sum_{i=1}^t 6\alpha_i^2\cdot \left\|\rvg_i\right\|^2\right]\\
            = &\mathbb{E}\left[\sum_{i=1}^t 6\alpha_i^2\cdot \left\|\rvg_i-\nabla f(\vtheta_i) + \nabla f(\vtheta_i)\right\|^2\right]\\
            \le & \sum_{i=1}^t 3\alpha_i^2\sigma^2 + \mathbb{E}\left[\sum_{i=1}^t 3\alpha_i^2\cdot\left\|\nabla f(\vtheta_i)\right\|^2\right] \le 3\sigma^2\alpha_0^2\left(\ln(t)+1\right)+\mathbb{E}\left[\sum_{i=1}^t 3\alpha_i^2\cdot\left\|\nabla f(\vtheta_i)\right\|^2\right]
        \end{split}
    \end{equation}
    Then, we provide the upper bound of $T_{3.2}$ as follows
    \begin{equation}
        \label{eq:T_3_2_refor}
        \begin{split}
            T_{3.2} =& \mathbb{E}\left[\sum_{i=1}^t 3\cdot\left\|\left[\left(\frac{6}{\sqrt{i+2}}+1\right)\cdot \frac{\hat{\beta}_{i+1}}{\sqrt{\frac{i+1}{i}}-\hat{\beta}_{i+1}} - \left(\frac{\sqrt{\frac{1}{i}}+1}{1-\hat{\beta}_i} - 1\right)\right]\cdot \left(\vtheta_{i+1}-\vtheta_i\right)\right\|^2\right]\\
            =& \mathbb{E}\left[\sum_{i=1}^t 3\alpha_i^2\cdot \left|\left(\frac{6}{\sqrt{i+2}}+1\right)\cdot\frac{\hat{\beta}_{i+1}}{\sqrt{\frac{i+1}{i}}-\hat{\beta}_{i+1}}-\frac{\sqrt{\frac{1}{i}}+1}{1-\hat{\beta}_i} +1 \right|^2\cdot \left\|\hat{\rvm}_i\right\|^2\right]\\
            \mathop{\le}^{\mathcircled1} & \mathbb{E}\left[\sum_{i=1}^t 15\alpha_i^2\cdot \left|\left(\frac{6}{\sqrt{i+2}}+1\right)\cdot\frac{\hat{\beta}_{i+1}}{\sqrt{\frac{i+1}{i}}-\hat{\beta}_{i+1}}-\frac{\sqrt{\frac{1}{i}}+1}{1-\hat{\beta}_i} +1 \right| \cdot \left\|\hat{\rvm}_{i}\right\|^2\right]\\
            \mathop{\le}^{\mathcircled2} &\underbrace{\mathbb{E}\left[\sum_{i=1}^t \frac{15\alpha_i^2(1+\beta_i)^2}{2}\cdot \left|\left(\frac{6}{\sqrt{i+2}}+1\right)\cdot\frac{\hat{\beta}_{i+1}}{\sqrt{\frac{i+1}{i}}-\hat{\beta}_{i+1}}-\frac{\sqrt{\frac{1}{i}}+1}{1-\hat{\beta}_i} +1 \right| \cdot \sigma^2\right]}_{T_{3.2.1}}\\
            & +\underbrace{\mathbb{E}\left[\sum_{i=1}^t \frac{15\alpha_i^2(1+\beta_i)^2}{2}\cdot \left|\left(\frac{6}{\sqrt{i+2}}+1\right)\cdot\frac{\hat{\beta}_{i+1}}{\sqrt{\frac{i+1}{i}}-\hat{\beta}_{i+1}}-\frac{\sqrt{\frac{1}{i}}+1}{1-\hat{\beta}_i} +1 \right| \cdot \left\|\nabla f(\vtheta_i)\right\|^2\right]}_{T_{3.2.2}}
        \end{split}
    \end{equation}
    where we have $\mathcircled1$ and $\mathcircled2$ becuase of \Eqref{eq:T_2_3_constant_upper_bound} and Lemma~\ref{lem:mini_batch_norm_bound}. Then we rearrange the terms of $T_{3.2.1}$ in the following
    \begin{equation*}
        \begin{split}
            T_{3.2.1} =& \mathbb{E}\left[\sum_{i=1}^t \frac{15\alpha_0^2(1+\beta_i)^2\sigma^2}{2}\cdot \frac{1}{i} \cdot \left|\left(\frac{6}{\sqrt{i+2}}+1\right)\cdot\frac{\hat{\beta}_{i+1}}{\sqrt{\frac{i+1}{i}}-\hat{\beta}_{i+1}}-\left(\sqrt{\frac{1}{i}}+1\right)\cdot\frac{1}{1-\hat{\beta}_i} +1 \right| \right]\\
            \mathop{=}^{\mathcircled1}& \mathbb{E}\left[\sum_{i=1}^t \frac{15\alpha_0^2(1+\beta_i)^2\sigma^2}{2}\cdot \frac{1}{i} \cdot \left[\left(\sqrt{\frac{1}{i}}+1\right)\cdot\frac{1}{1-\hat{\beta}_i} - \left(\frac{6}{\sqrt{i+2}}+1\right)\cdot\frac{\hat{\beta}_{i+1}}{\sqrt{\frac{i+1}{i}}-\hat{\beta}_{i+1}}-1 \right] \right]
        \end{split}
    \end{equation*}
    where $\mathcircled1$ holds due to Corollary~\ref{cor:T_2_3_no_negative_property}. Similar to the techniques we utilized to bound $T_{2.3.1}$ of \Eqref{eq:T_2_3_refor}, we have 
    \begin{equation}
        \label{eq:T_3_2_1_refor}
        \begin{split}
            T_{3.2.1}\le &\frac{15\alpha_0^2\left(1+ \frac{1}{50}\right)^2\sigma^2}{2} \cdot\mathbb{E}\left[\frac{2}{1-\hat{\beta}_1}-1\right] + \frac{15\alpha_0^2\left(1+ \frac{1}{50}\right)^2\sigma^2}{2} \cdot  \mathbb{E}\left[\sum_{i=1}^{t-1} \frac{1}{i+1}\cdot \left(\frac{\sqrt{\frac{1}{i+1}}+1}{1-\hat{\beta}_{i+1}}-1\right) \right.\\
            & \left.-\frac{1}{i}\cdot\left(\frac{6}{\sqrt{i+2}}+1\right)\cdot\frac{\hat{\beta}_{i+1}}{\sqrt{\frac{i+1}{i}}-\hat{\beta}_{i+1}}\right]\\
            \le &15\alpha_0^2\sigma^2 \cdot\mathbb{E}\left[\frac{2}{1-\hat{\beta}_1}-1\right] + 15\alpha_0^2\sigma^2 \cdot \mathbb{E}\left[\sum_{i=1}^{t-1}\frac{1}{i}\cdot\left(\frac{\sqrt{\frac{1}{i+1}}+1}{1-\hat{\beta}_{i+1}}-1 \right.\right.\\
            & \left.\left.-\left(\frac{6}{\sqrt{i+2}}+1\right)\cdot\frac{\hat{\beta}_{i+1}}{\sqrt{\frac{i+1}{i}}-\hat{\beta}_{i+1}} \right)\right]\\
            \mathop{\le}^{\mathcircled1} & 15\alpha_0^2\sigma^2 \cdot \mathbb{E}\left[\frac{2}{1-\hat{\beta}_i} - 1\right] + 15\alpha_0^2\sigma^2 \cdot \mathbb{E}\left[\sum_{i=1}^{t-1}\frac{1}{i}\cdot \frac{6}{\sqrt{i+2}}\right]
            \mathop{\le}^{\mathcircled2}  210\alpha_0^2\sigma^2
        \end{split}
    \end{equation}
    where $\mathcircled2$ establishes becuase
    \begin{equation}
        \label{eq:int_inequality}
        \frac{2}{1-\hat{\beta}_i} \le 3,\ \forall\ i\ge 1,\quad \mathrm{and} \quad \mathbb{E}\left[\sum_{i=1}^{t-1}\frac{1}{i}\cdot \frac{1}{\sqrt{i+2}}\right] \le \int_1^t i^{-1.5} di \le 2. 
    \end{equation}
    For $T_{3.2.2}$ in \Eqref{eq:T_3_2_refor}, we then obtain
    \begin{equation}
        \label{eq:T_3_2_2_refor}
        \begin{split}
            T_{3.2.2} &= \mathbb{E}\left[\sum_{i=1}^t \frac{15\alpha_i^2(1+\beta_i)^2}{2}\cdot \left|\left(\frac{6}{\sqrt{i+2}}+1\right)\cdot\frac{\hat{\beta}_{i+1}}{\sqrt{\frac{i+1}{i}}-\hat{\beta}_{i+1}}-\frac{\sqrt{\frac{1}{i}}+1}{1-\hat{\beta}_i} +1 \right| \cdot \left\|\nabla f(\vtheta_i)\right\|^2\right]\\
            & \le \mathbb{E}\left[\sum_{i=1}^t \frac{75\alpha_i^2(1+\beta_i)^2}{2} \cdot \left\|\nabla f(\vtheta_i)\right\|^2\right].
        \end{split}
    \end{equation}
    Submitting \Eqref{eq:T_3_2_1_refor} and \Eqref{eq:T_3_2_2_refor} back into \Eqref{eq:T_3_2_refor}, we find
    \begin{equation}
        \label{eq:T_3_2_final}
        \begin{split}
            T_{3.2}\le 210\alpha_0^2\sigma^2 + \mathbb{E}\left[\sum_{i=1}^t \frac{75\alpha_i^2(1+\beta_i)^2}{2} \cdot \left\|\nabla f(\vtheta_i)\right\|^2\right].
        \end{split}
    \end{equation}
    For $T_{3.3}$ of \Eqref{eq:t3_for_z_lipschitz_continuty}, we have
    \begin{equation}
        \label{eq:T_3_3_refor}
        \begin{split}
            T_{3.3} = &\mathbb{E}\left[\sum_{i=2}^t 3\cdot\left\|\left(\left(\frac{6}{\sqrt{i+1}}+1\right)\cdot\frac{\alpha_{i-1}\hat{\beta}_i}{\sqrt{\frac{i}{i-1}}-\hat{\beta}_i} - \left(\sqrt{\frac{1}{i}}+1\right)\cdot\frac{\alpha_i\hat{\beta}_i}{1-\hat{\beta}_i}\right)\hat{m}_{i-1}\right\|^2\right]\\
            =& \mathbb{E}\left[\sum_{i=2}^t 3\cdot \left[\left(\frac{6}{\sqrt{i+1}}+1\right)\cdot\frac{\alpha_{i-1}\hat{\beta}_i}{\sqrt{\frac{i}{i-1}}-\hat{\beta}_i} - \left(\sqrt{\frac{1}{i}}+1\right)\cdot\frac{\alpha_i\hat{\beta}_i}{1-\hat{\beta}_i}\right]^2 \cdot \left\|\hat{m}_{i-1}\right\|^2\right]\\
            \mathop{\le}^{\mathcircled1} & \mathbb{E}\left[\sum_{i=2}^t 21\alpha_{i-1}\cdot \left|\left(\frac{6}{\sqrt{i+1}}+1\right)\cdot\frac{\alpha_{i-1}\hat{\beta}_i}{\sqrt{\frac{i}{i-1}}-\hat{\beta}_i} - \left(\sqrt{\frac{1}{i}}+1\right)\cdot\frac{\alpha_i\hat{\beta}_i}{1-\hat{\beta}_i}\right| \cdot \left\|\hat{m}_{i-1}\right\|^2\right]\\
            \mathop{\le}^{\mathcircled2} & \underbrace{\mathbb{E}\left[\sum_{i=2}^t 11\alpha_{i-1}\cdot \left|\left(\frac{6}{\sqrt{i+1}}+1\right)\cdot\frac{\alpha_{i-1}\hat{\beta}_i}{\sqrt{\frac{i}{i-1}}-\hat{\beta}_i} - \left(\sqrt{\frac{1}{i}}+1\right)\cdot\frac{\alpha_i\hat{\beta}_i}{1-\hat{\beta}_i}\right| \cdot \left\|\nabla f(\vtheta_{i-1})\right\|^2\right]}_{T_{3.3.1}}\\
            &+\underbrace{\mathbb{E}\left[\sum_{i=2}^t 11\alpha_{i-1}\cdot \left|\left(\frac{6}{\sqrt{i+1}}+1\right)\cdot\frac{\alpha_{i-1}\hat{\beta}_i}{\sqrt{\frac{i}{i-1}}-\hat{\beta}_i} - \left(\sqrt{\frac{1}{i}}+1\right)\cdot\frac{\alpha_i\hat{\beta}_i}{1-\hat{\beta}_i}\right| \cdot \sigma^2\right]}_{T_{3.3.2}},
        \end{split}
    \end{equation}
    where we have $\mathcircled1$ due to the fact that
    \begin{equation*}
        \begin{split}
            &\left|\left(\frac{6}{\sqrt{i+1}}+1\right)\cdot\frac{\alpha_{i-1}\hat{\beta}_i}{\sqrt{\frac{i}{i-1}}-\hat{\beta}_i} - \left(\sqrt{\frac{1}{i}}+1\right)\cdot\frac{\alpha_i\hat{\beta}_i}{1-\hat{\beta}_i}\right|\\
            \le &\left|\left(\frac{6}{\sqrt{i+1}}+1\right)\cdot\frac{\alpha_{i-1}\hat{\beta}_i}{\sqrt{\frac{i}{i-1}}-\hat{\beta}_i}\right| +\left| \left(\sqrt{\frac{1}{i}}+1\right)\cdot\frac{\alpha_i\hat{\beta}_i}{1-\hat{\beta}_i}\right| \le \left(\frac{6}{\sqrt{3}}+1\right)\cdot \alpha_{i-1} + 2\alpha_i\le 7\alpha_{i-1}
        \end{split}
    \end{equation*}
    when $i\ge 1$ and $\alpha_i = \frac{\alpha_0}{\sqrt{i}}$. Besides, $\mathcircled2$ establishes becuase of Lemma~\ref{lem:mini_batch_norm_bound}. Then, we provide the upper bound of $T_{3.3.1}$ as follows
    \begin{equation}
        \label{eq:T_3_3_1_final}
        \begin{split}
            T_{3.3.1} = &\mathbb{E}\left[\sum_{i=2}^t 11\alpha_{i-1}\cdot \left|\left(\frac{6}{\sqrt{i+1}}+1\right)\cdot\frac{\alpha_{i-1}\hat{\beta}_i}{\sqrt{\frac{i}{i-1}}-\hat{\beta}_i}- \left(\sqrt{\frac{1}{i}}+1\right)\cdot\frac{\alpha_i\hat{\beta}_i}{1-\hat{\beta}_i}\right| \cdot \left\|\nabla f(\vtheta_{i-1})\right\|^2\right]\\
            \le &\mathbb{E}\left[\sum_{i=2}^t 77\alpha_{i-1}^2\cdot \left\|\nabla f(\vtheta_{i-1})\right\|^2\right]\le \mathbb{E}\left[\sum_{i=1}^t 77\alpha_{i}^2 \cdot \left\|\nabla f(\vtheta_{i})\right\|^2\right].
        \end{split}
    \end{equation}
    After that, we find $T_{3.3.2}$ of \Eqref{eq:T_3_3_refor} satisfies
    \begin{equation}
        \label{eq:T_3_3_2_final}
        \begin{split}
            T_{3.3.2}=&\mathbb{E}\left[\sum_{i=2}^t 11\alpha_{i-1}\cdot \left|\left(\frac{6}{\sqrt{i+1}}+1\right)\cdot\frac{\alpha_{i-1}\hat{\beta}_i}{\sqrt{\frac{i}{i-1}}-\hat{\beta}_i} - \left(\sqrt{\frac{1}{i}}+1\right)\cdot\frac{\alpha_i\hat{\beta}_i}{1-\hat{\beta}_i}\right| \cdot \sigma^2\right]\\
            = & \mathbb{E}\left[11\alpha_0 \cdot \left(\frac{6}{\sqrt{3}}+1\right)\cdot \frac{\alpha_0\hat{\beta}_2}{\sqrt{2}-\hat{\beta}_2}\cdot \sigma^2\right] -\mathbb{E}\left[\left(\sqrt{\frac{1}{t}}+1\right)\cdot 11\alpha_t \cdot \frac{\alpha_t\hat{\beta}_t}{1-\hat{\beta}_t}\cdot \sigma^2\right]\\
            &+\mathbb{E}\left[\sigma^2 \sum_{i=2}^{t-1} \left(11\alpha_i \cdot \left(\frac{6}{\sqrt{i+2}}+1\right)\cdot \frac{\alpha_i\hat{\beta}_{i+1}}{\sqrt{\frac{i+1}{i}}-\hat{\beta}_{i+1}}-11\alpha_{i+1}\cdot \left(\sqrt{\frac{1}{i}}+1\right)\cdot \frac{\alpha_i\hat{\beta}_i}{1-\hat{\beta}_i}\right)\right]\\
            \le &\mathbb{E}\left[11\alpha_0^2\sigma^2\right]+ \mathbb{E}\left[\sigma^2 \sum_{i=2}^{t-1} 11\alpha_i\cdot \left( \left(\frac{6}{\sqrt{i+2}}+1\right)\cdot \frac{\alpha_i\hat{\beta}_{i+1}}{\sqrt{\frac{i+1}{i}}-\hat{\beta}_{i+1}}-\left(\sqrt{\frac{1}{i}}+1\right)\cdot \frac{\alpha_i\hat{\beta}_i}{1-\hat{\beta}_i}\right)\right]\\
            \mathop{\le}^{\mathcircled1} &\mathbb{E}\left[\sigma^2\sum_{i=1}^{t-1}11\alpha_i\cdot\frac{\alpha_0}{i}\right] \mathop{\le}^{\mathcircled2} \mathbb{E}\left[11\sigma^2\alpha_0^2 \sum_{i=1}^{t-1}i^{-1.5}\right] \mathop{\le}^{\mathcircled3} 22\sigma^2\alpha_0^2.
        \end{split}
    \end{equation}
    where $\mathcircled1$, $\mathcircled2$ and $\mathcircled3$ establish due to Lemma~\ref{lem:T_2_1_telescoping_sum_property}, $\alpha_i = \frac{\alpha_0}{\sqrt{i}}$ and \Eqref{eq:int_inequality}. Then, we introduce \Eqref{eq:T_3_3_2_final} and \Eqref{eq:T_3_3_1_final} to \Eqref{eq:T_3_3_refor}, and have
    \begin{equation}
        \label{eq:T_3_3_final}
        \begin{split}
            T_{3.3}\le 22\sigma^2\alpha_0^2 + \mathbb{E}\left[\sum_{i=1}^t 77\alpha_{i}^2 \cdot \left\|\nabla f(\vtheta_{i})\right\|^2\right].
        \end{split}
    \end{equation}
    Thus, we obtain that $T_3$ of \Eqref{eq:z_lipschitz_continuty} can be bounded as
    \begin{equation}
        \label{eq:T_3_final}
        \begin{split}
            \mathbb{E}\left[\sum_{i=1}^t L\left\|\vtheta_{i+1}^\prime - \vtheta_{i}^\prime\right\|^2\right] \le &3\sigma^2\alpha_0^2\ln(t) + 235\alpha_0^2\sigma^2+ \mathbb{E}\left[\sum_{i=1}^t 155\alpha_i^2\left\|\nabla f(\vtheta_i)\right\|^2\right].
        \end{split}
    \end{equation}
    according to \Eqref{eq:T_3_1_final}, \Eqref{eq:T_3_2_final} and \Eqref{eq:T_3_3_final}.

    Finally, if we set
    \begin{equation}
        \label{eq:alpha_0_selection}
        \begin{split}
            \alpha_0\le \frac{3}{4L+1240}
        \end{split}
    \end{equation}
    and submit \Eqref{eq:t1_for_z_lipschitz_continuty}, \Eqref{eq:t2_final} and \Eqref{eq:T_3_final} back into \Eqref{eq:z_lipschitz_continuty}, we then obtain that
    \begin{equation}
        \label{eq:final_results}
        \begin{split}
            \mathbb{E}\left[f(\vtheta^\prime_{t+1})-f(\vtheta^\prime_1)\right] \le C_0 + C_1\ln(t) - \mathbb{E}\left[\sum_{i=1}^t \frac{\alpha_i}{2} \left\|\nabla f(\vtheta_i)\right\|^2\right]
        \end{split}
    \end{equation}
    where $C_0$ and $C_1$ satisfy
    \begin{equation*}
        \begin{split}
            C_0 = \left(\frac{L}{2}+235\right)\cdot \alpha_0^2\sigma^2+\left(70G^2 +1\right)\alpha_0,\quad C_1 =\left(\frac{L}{2}+3\right)\cdot\alpha_0^2\sigma^2 + \left(60G^2 +1\right)\alpha_0.
        \end{split}
    \end{equation*} 
    As a result, we have
    \begin{equation}
        \label{eq:final_convergence}
        \begin{split}
            \frac{\mathbb{E}\left[\sum_{i=1}^t \alpha_i\left\|\nabla f(\vtheta_i)\right\|^2\right]}{\sum_{i=1}^t \alpha_i} \le \frac{\mathbb{E}\left[\sum_{i=1}^t \alpha_i\left\|\nabla f(\vtheta_i)\right\|^2\right]}{2\alpha_0\sqrt{t}} \le \frac{C_0^\prime}{\sqrt{t}} + \frac{C_1^\prime\ln(t)}{\sqrt{t}},
        \end{split}
    \end{equation}
    where
    \begin{equation}
        \label{eq:final_constant}
        \begin{split}
            C_0^\prime = \left(\frac{L}{2}+235\right)\cdot \alpha_0\sigma^2+\left(70G^2 +1\right),\quad C_1^\prime = \left(\frac{L}{2}+3\right)\cdot\alpha_0\sigma^2 + \left(60G^2 +1\right).
        \end{split}
    \end{equation}
    Thus, the proof has completed.
\end{proof}
\subsection{Some Lemmas for Theorem 4.1}
\begin{lemma}
    \label{lem:T_2_3_telescoping_sum_property}
    In Algorithm 1, if we denote
    \begin{equation*}
        \hat{\beta}_t:= \beta_t \cdot \frac{\left\|\rvg_t\right\|}{\left\|\hat{\rvm}_{t-1}\right\| + \delta_t},
    \end{equation*}
    where the parameter satisfies $\beta_t\le \frac{1}{50}$. We can obtain that
    \begin{equation*}
        \left(\frac{6}{\sqrt{i+1}}+1\right)\frac{\hat{\beta}_t}{\sqrt{\frac{i}{i-1}}-\hat{\beta}_i} + \frac{6}{\sqrt{i+1}} +1 \ge \frac{1}{\sqrt{i}}\cdot \frac{1}{1-\hat{\beta}_i} + \frac{1}{1-\hat{\beta}_i},
    \end{equation*}
    when $i\ge 2$.
\end{lemma}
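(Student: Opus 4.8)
The plan is to collapse both sides into single fractions and then reduce to an elementary estimate in $i$. Write $c:=6/\sqrt{i+1}$, $r:=\sqrt{i/(i-1)}$, $s:=1/\sqrt{i}$ and $b:=\hat{\beta}_i$; by Corollary~\ref{cor:iter_con_bound} we have $b\le\tfrac1{12}$, hence $r\ge 1>b$ and every denominator below is positive. The left-hand side factors as $(c+1)\bigl(\tfrac{b}{r-b}+1\bigr)=(c+1)\tfrac{r}{r-b}$ and the right-hand side equals $\tfrac{1+s}{1-b}$, so after cross-multiplying by $(r-b)(1-b)>0$ the claim is equivalent to $(c+1)\,r\,(1-b)\ge(1+s)(r-b)$. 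Expanding and collecting the $b$-terms, this reads
\[
r(c-s)\ \ge\ b\bigl[(c+1)r-(1+s)\bigr].
\]

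First I would record the two sign facts that turn this into a statement monotone in $b$: $c\ge s$ (since $36i\ge i+1$), and $(c+1)r-(1+s)\ge 0$ — indeed $(c+1)r=r+cr\ge 1+6s$ because $cr=6\sqrt{i}/\sqrt{i^2-1}\ge 6/\sqrt{i}=6s$ and $r\ge1$. With both the left side of the displayed inequality and the bracket on the right nonnegative, and $0\le b\le\tfrac1{12}$, it suffices to verify the inequality at the worst case $b=\tfrac1{12}$, i.e. $12\,r(c-s)\ge(c+1)r-(1+s)$, which rearranges to $11\,rc-12\,rs-(r-1)+s\ge0$.

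The last step is to plug in the exact or near-exact forms of the pieces: $rs=1/\sqrt{i-1}$ (an identity), $rc=6\sqrt{i}/\sqrt{i^2-1}\ge 6/\sqrt{i}$, and $r-1=\tfrac1{(i-1)(r+1)}\le\tfrac1{2(i-1)}$ (using $r+1\ge2$). This lower-bounds $11rc-12rs-(r-1)+s$ by $\tfrac{67}{\sqrt{i}}-\tfrac{12}{\sqrt{i-1}}-\tfrac1{2(i-1)}$; for $i\ge2$ one has $\sqrt{i-1}\ge\sqrt{i}/\sqrt2$ and $2(i-1)\ge\sqrt{i}$, so this is at least $(66-12\sqrt2)/\sqrt{i}>0$, which closes the argument.

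The hard part will be that the inequality is essentially tight as $i\to\infty$: after the reduction both sides approach $1/(1-b)$, so discarding the $6/\sqrt{i+1}$ terms, or being lossy on $r-1$, destroys it. The care is therefore in keeping the factor $(c+1)$ intact through the cross-multiplication rather than dropping $c$, and in using the sharp sizes of $rs$ and $rc$; the surviving $\Theta(1/\sqrt{i})$ margin is precisely what the constant $6$ in $\Psi$ — against the $12$ coming from $\beta_t\le\tfrac1{50}\Rightarrow\hat\beta_i\le\tfrac1{12}$ — is tuned to supply. (A minor point: the numerator on the left-hand side is printed as $\hat\beta_t$ but must be read as $\hat\beta_i$ for the statement to be homogeneous; I treat it so throughout.)
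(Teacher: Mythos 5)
Your proof is correct, and it takes a genuinely different route from the paper's. Both proofs begin by observing that the left-hand side factors as $(c+1)\frac{r}{r-b}$ (the paper's Eq.~\ref{eq:inequality_3prime} does exactly this rewriting), but from there the strategies diverge. The paper's proof works via an intermediate separator: it introduces the quantity $\frac{2\hat\beta_i}{i-1}$ and shows, using only $\hat\beta_i\le\tfrac12$, that this quantity is simultaneously dominated by $\frac{6}{\sqrt{i+1}}\frac{r}{r-\hat\beta_i}-\frac{s}{1-\hat\beta_i}$ and dominates $\frac{1}{1-\hat\beta_i}-\frac{r}{r-\hat\beta_i}$; the key algebraic move is the telescoping identity $\frac{1}{1-\hat\beta_i}-\frac{r}{r-\hat\beta_i}=\frac{(r-1)\hat\beta_i}{(1-\hat\beta_i)(r-\hat\beta_i)}$ paired with the crude numerical steps $4(i-1)^2\ge i+1$ and $(1-\hat\beta_i)^{-2}\le 4$. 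You instead clear both denominators in one shot, land on the linear-in-$b$ reformulation $r(c-s)\ge b\bigl[(c+1)r-(1+s)\bigr]$, establish that both sides are nonnegative and the right side is monotone in $b$, and then close at the worst case $b=\tfrac1{12}$ with the exact identity $rs=1/\sqrt{i-1}$ and the sharp estimates $rc\ge 6/\sqrt{i}$, $r-1\le 1/(2(i-1))$. Your version is arguably more transparent about where the slack lives (it isolates the surviving $\Theta(1/\sqrt{i})$ margin explicitly and explains why the constant $6$ must beat the $12$ coming from $\hat\beta_i\le\tfrac1{12}$), whereas the paper's version only consumes $\hat\beta_i\le\tfrac12$ and so is nominally slightly more general in the allowable $\hat\beta$; both are valid given the standing hypotheses. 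Your remark that the $\hat\beta_t$ in the printed numerator must be read as $\hat\beta_i$ is correct — it is a typo in the lemma statement.
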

\begin{proof}
    With the fact that the inequality $4i^2-9i+3 > 0$ satisfies,when $i\ge 2$, we have
    \begin{equation*}
        \left[2\left(i-1\right)\right]^2\ge i+1,\quad i\ge 2 \Longleftrightarrow \frac{\sqrt{i+1}}{i-1}\le 2,\quad i\ge 2.
    \end{equation*}
    Combining with the fact that $\hat{\beta}_i\le \frac{1}{2}$ and $\sqrt{\frac{i+1}{i}}\le 2$ when $i\ge 2$, we have 
    \begin{equation}
        \label{eq:upper_bound_with_i_selection}
        \begin{split}
        6-2\sqrt{\frac{i+1}{i}}\ge 2 \ge \frac{\sqrt{i+1}}{i-1} \ge 2\hat{\beta}_i\frac{\sqrt{i+1}}{i-1},\ i\ge 2
        \Longleftrightarrow \frac{6}{\sqrt{i+1}} - \frac{2}{\sqrt{i}}\ge \frac{2\hat{\beta}_i}{i-1} ,\ i\ge 2.
        \end{split}
    \end{equation}
    When $i\ge 2$, we then obtain that
    \begin{equation}
        \label{eq:inequality_4_mid_term_upper_bound}
        \begin{split}
            \frac{2\hat{\beta}_i}{i-1}\le \frac{6}{\sqrt{i+1}} - \frac{2}{\sqrt{i}} \mathop{\le}^{\mathcircled1} \frac{6}{\sqrt{i+1}} - \frac{1}{\sqrt{i}} \cdot \frac{1}{1-\hat{\beta}_i}\le \frac{6}{\sqrt{i+1}} \cdot \frac{\sqrt{\frac{i}{i-1}}}{\sqrt{\frac{i}{i-1}}-\hat{\beta}_t} - \frac{1}{\sqrt{i}}\cdot \frac{1}{1-\hat{\beta}_i},  
        \end{split}
    \end{equation}
    where $\mathcircled1$ establishes because of $\hat{\beta}_i\le \frac{1}{2}$. Besides, we provide some lower bound of $\frac{2\hat{\beta}_i}{i-1}$, which can be presented as
    \begin{equation}
        \label{eq:inequality_4_mid_term_lower_bound}
        \begin{split}
            & \frac{2\hat{\beta}_i}{i-1}\ge 4\cdot \left(\frac{\frac{i}{i-1} - 1}{\sqrt{\frac{i}{i-1}}+1}\right) \hat{\beta}_i\ge \frac{1}{\left(1-\hat{\beta}_i\right)^2}\cdot \left(\frac{\frac{i}{i-1} - 1}{\sqrt{\frac{i}{i-1}}+1}\right) \hat{\beta}_i\\
            = & \frac{\left(\sqrt{\frac{i}{i-1}}-1\right)\hat{\beta}_i}{\left(1-\hat{\beta}_i\right)^2}\ge \frac{\left(\sqrt{\frac{i}{i-1}}-1\right)\hat{\beta}_i}{\left(\sqrt{\frac{i}{i-1}}-\hat{\beta}_i\right)\left(1-\hat{\beta}_i\right)} = \frac{1}{1-\hat{\beta}_i} - \frac{\sqrt{\frac{i}{i-1}}}{\sqrt{\frac{i}{i-1}}-\hat{\beta}_i}
        \end{split}
    \end{equation}
    when $i\ge 2$. Combining~\Eqref{eq:inequality_4_mid_term_upper_bound} and~\Eqref{eq:inequality_4_mid_term_lower_bound}, we then obtain
    \begin{equation}
        \label{eq:inequality_4_establishment}
        \begin{split}
            & \frac{6}{\sqrt{i+1}} \cdot \frac{\sqrt{\frac{i}{i-1}}}{\sqrt{\frac{i}{i-1}}-\hat{\beta}_t} - \frac{1}{\sqrt{i}}\cdot \frac{1}{1-\hat{\beta}_i}\ge \frac{1}{1-\hat{\beta}_i} - \frac{\sqrt{\frac{i}{i-1}}}{\sqrt{\frac{i}{i-1}}-\hat{\beta}_i}\\
            \Longleftrightarrow & \frac{6}{\sqrt{i+1}} \cdot \frac{\sqrt{\frac{i}{i-1}}}{\sqrt{\frac{i}{i-1}}-\hat{\beta}_t} +\frac{\sqrt{\frac{i}{i-1}}}{\sqrt{\frac{i}{i-1}}-\hat{\beta}_i}- \frac{1}{\sqrt{i}}\cdot \frac{1}{1-\hat{\beta}_i} - \frac{1}{1-\hat{\beta}_i} \ge 0\\
            \Longleftrightarrow & \left(\frac{6}{\sqrt{i+1}}+1\right)\frac{\hat{\beta}_t}{\sqrt{\frac{i}{i-1}}-\hat{\beta}_i} + \frac{6}{\sqrt{i+1}} +1 \ge \frac{1}{\sqrt{i}}\cdot \frac{1}{1-\hat{\beta}_i} + \frac{1}{1-\hat{\beta}_i}.
        \end{split}
    \end{equation}
    to complete the proof.
\end{proof}
\begin{corollary}
    \label{cor:T_2_1_no_negative_property}
    Under the hypotheses of Lemma~\ref{lem:T_2_3_telescoping_sum_property}, we have
    \begin{equation*}
        \frac{1}{\sqrt{i-1}} \cdot \left[\left(\frac{6}{\sqrt{i+1}}+1\right)\cdot \frac{\hat{\beta}_i}{\sqrt{\frac{i}{i-1}}-\hat{\beta}_i} \right] \ge \frac{1}{\sqrt{i}} \cdot\left(\frac{\left(\frac{1}{\sqrt{i}}+1\right)\hat{\beta}_i}{1-\hat{\beta}_i}\right)
    \end{equation*}
\end{corollary}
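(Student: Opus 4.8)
The plan is to peel off the common factor $\hat{\beta}_i$ and reduce the claim to an elementary inequality in $i$ alone. Since every quantity in sight is non-negative and both sides vanish when $\hat{\beta}_i=0$, I would first assume $\hat{\beta}_i>0$. Under the hypotheses of Lemma~\ref{lem:T_2_3_telescoping_sum_property} one has $\hat{\beta}_i\le\tfrac{1}{12}$ by Corollary~\ref{cor:iter_con_bound}, so that $1-\hat{\beta}_i>0$ and $\sqrt{\tfrac{i}{i-1}}-\hat{\beta}_i>0$; hence I can cancel $\hat{\beta}_i$ from both sides and cross-multiply. Using the identities $\sqrt{i-1}\,\bigl(\sqrt{\tfrac{i}{i-1}}-\hat{\beta}_i\bigr)=\sqrt{i}-\hat{\beta}_i\sqrt{i-1}$ and $\tfrac{1}{\sqrt{i}}\bigl(\tfrac{1}{\sqrt{i}}+1\bigr)=\tfrac{1+\sqrt{i}}{i}$, the target becomes
\[
\Bigl(\tfrac{6}{\sqrt{i+1}}+1\Bigr)\,i\,(1-\hat{\beta}_i)\ \ge\ (1+\sqrt{i})\bigl(\sqrt{i}-\hat{\beta}_i\sqrt{i-1}\bigr).
\]

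Next I would expand both sides and collect the terms carrying $\hat{\beta}_i$, rewriting the inequality in the equivalent form
\[
\frac{6i}{\sqrt{i+1}}-\sqrt{i}\ \ge\ \hat{\beta}_i\Bigl[\,\frac{6i}{\sqrt{i+1}}+i-\sqrt{i-1}-\sqrt{i(i-1)}\,\Bigr].
\]
The left-hand side is positive — indeed $\tfrac{6i}{\sqrt{i+1}}\ge 3\sqrt{2}\,\sqrt{i}$ since $i+1\le 2i$ — while for the bracket on the right I would use $\sqrt{i(i-1)}\ge i-1$ together with $\sqrt{i-1}\ge 1$ (this is exactly where $i\ge 2$ enters) to get $i-\sqrt{i-1}-\sqrt{i(i-1)}\le 0$, so that the bracket is at most $\tfrac{6i}{\sqrt{i+1}}$. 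If the bracket is non-positive the right-hand side is $\le 0$ and we are finished; otherwise substituting $\hat{\beta}_i\le\tfrac{1}{12}$ reduces the claim to $\tfrac{6i}{\sqrt{i+1}}-\sqrt{i}\ge\tfrac{i}{2\sqrt{i+1}}$, i.e.\ $\tfrac{11i}{2}\ge\sqrt{i}\,\sqrt{i+1}$, i.e.\ $121\,i\ge 4(i+1)$ after squaring, which is clear for $i\ge 1$.

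The entire computation is routine arithmetic, so I anticipate no genuine obstacle; the one point needing a little care is organizing the cross-multiplication so that the $\hat{\beta}_i$-dependence isolates into a single bracket, and then observing that this bracket is in fact negative for $i\ge 2$, which is what makes the slack $\hat{\beta}_i\le\tfrac{1}{12}$ far more than enough. It is worth noting that the corollary cannot simply be read off from the \emph{statement} of Lemma~\ref{lem:T_2_3_telescoping_sum_property}: the lower bound that lemma supplies on $\bigl(\tfrac{6}{\sqrt{i+1}}+1\bigr)\tfrac{\hat{\beta}_i}{\sqrt{\tfrac{i}{i-1}}-\hat{\beta}_i}$ becomes vacuous (it is negative) when $\hat{\beta}_i$ is small, so the self-contained elementary argument sketched above is the route to take.
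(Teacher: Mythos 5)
Your argument is correct, but it takes a genuinely different and more laborious route than the paper. The paper's proof is a two-line algebraic observation: the left side of Lemma~\ref{lem:T_2_3_telescoping_sum_property} factors as
\begin{equation*}
\left(\frac{6}{\sqrt{i+1}}+1\right)\frac{\hat{\beta}_i}{\sqrt{\frac{i}{i-1}}-\hat{\beta}_i} + \frac{6}{\sqrt{i+1}} +1
= \left(\frac{6}{\sqrt{i+1}}+1\right)\frac{\sqrt{\frac{i}{i-1}}}{\sqrt{\frac{i}{i-1}}-\hat{\beta}_i},
\end{equation*}
and the lemma's right side is $\bigl(\tfrac{1}{\sqrt{i}}+1\bigr)\tfrac{1}{1-\hat{\beta}_i}$, so multiplying both sides of the lemma by $\tfrac{\hat{\beta}_i}{\sqrt{i}}$ and using $\sqrt{\tfrac{i}{i-1}}/\sqrt{i}=1/\sqrt{i-1}$ yields the corollary verbatim. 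Your route instead cancels $\hat{\beta}_i$, cross-multiplies, isolates the $\hat{\beta}_i$-dependence into a single bracket, and invokes $\hat{\beta}_i\le\tfrac{1}{12}$ together with two elementary estimates; all of these steps check out (the sign analysis of the bracket, the bound $\tfrac{6i}{\sqrt{i+1}}\ge 3\sqrt{2}\sqrt{i}$, and the final reduction to $117i\ge 4$ are all correct). The paper's derivation is shorter and makes clear the corollary really is a rescaled restatement of the lemma; your derivation is self-contained and would survive even if the lemma had not been proved first. However, your closing remark is mistaken: the corollary \emph{can} be read off from the statement of Lemma~\ref{lem:T_2_3_telescoping_sum_property}. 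What fails is only the naive rearrangement that subtracts $\tfrac{6}{\sqrt{i+1}}+1$ from both sides; the factoring above sidesteps that entirely, and this is exactly what the paper does.
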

\begin{proof}
    Acoording to~Lemma~\ref{lem:T_2_3_telescoping_sum_property}, the LHS of~\Eqref{eq:inequality_4_establishment}
    can be reformulated as 
    \begin{equation}
        \label{eq:inequality_3prime}
        \begin{split}
            \left(\frac{6}{\sqrt{i+1}}+1\right)\frac{\hat{\beta}_t}{\sqrt{\frac{i}{i-1}}-\hat{\beta}_i} + \frac{6}{\sqrt{i+1}} +1 = \left(\frac{6}{\sqrt{i+1}}+1\right)\cdot \frac{\hat{\beta}_i}{\sqrt{\frac{i}{i-1}}-\hat{\beta}_i} \cdot \frac{\sqrt{\frac{i}{i-1}}}{\hat{\beta}_i}
        \end{split}
    \end{equation}
    Thus, submitting~\Eqref{eq:inequality_3prime} back to~\Eqref{eq:inequality_4_establishment}, we have
    \begin{equation}
        \label{eq:inequality_final_1}
        \frac{1}{\sqrt{i-1}} \cdot \left[\left(\frac{6}{\sqrt{i+1}}+1\right)\cdot \frac{\hat{\beta}_i}{\sqrt{\frac{i}{i-1}}-\hat{\beta}_i} \right] \ge \frac{1}{\sqrt{i}} \cdot\left(\frac{\left(\frac{1}{\sqrt{i}}+1\right)\hat{\beta}_i}{1-\hat{\beta}_i}\right)
    \end{equation} 
    to complete the proof.
\end{proof}
\begin{lemma}
    \label{lem:T_2_1_telescoping_sum_property}
    Under the hypotheses of Lemma~\ref{lem:T_2_3_telescoping_sum_property}, we have
    \begin{equation*}
        \frac{\hat{\beta}_{i+1}}{\sqrt{\frac{i+1}{i}}-\hat{\beta}_{i+1}}\cdot \left(\frac{6}{\sqrt{i+2}}+1\right) - \frac{\hat{\beta}_i}{1-\hat{\beta}_i}\left(\frac{1}{\sqrt{i}}+1\right) \le \frac{1}{\sqrt{i}}
    \end{equation*}
\end{lemma}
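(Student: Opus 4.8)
The plan is to reduce the claim to an elementary one-variable estimate, the one genuinely non-routine ingredient being the monotone clipping built into the moment recursion. Set $r_i:=\sqrt{(i+1)/i}\ge 1$ and write the left-hand side as $A-B$ with $A:=\frac{\hat{\beta}_{i+1}}{r_i-\hat{\beta}_{i+1}}\left(\frac{6}{\sqrt{i+2}}+1\right)$ and $B:=\frac{\hat{\beta}_i}{1-\hat{\beta}_i}\left(\frac{1}{\sqrt{i}}+1\right)\ge 0$. One should resist the temptation to bound $A$ by itself: since $\hat{\beta}_{i+1}$ may be as large as $\tfrac1{12}$ while $r_i\to 1$, the quantity $A$ stays bounded away from $0$ as $i\to\infty$, so simply discarding the nonnegative term $B$ cannot give $A-B\le 1/\sqrt{i}$ for large $i$. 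Instead, the proof must use that the coefficient sequence of \method obeys the one-sided growth bound $\hat{\beta}_{i+1}\le\sqrt{(i+1)/i}\,\hat{\beta}_i=r_i\hat{\beta}_i$, which is exactly what the $\min$ in $\Psi(\hat{\beta}_{t+1},\hat{\beta}_t)=\min\{\hat{\beta}_{t+1},\sqrt{(t+1)/t}\,\hat{\beta}_t\}$ from the hypotheses of Theorem~\ref{thm:gradient_norm_expectation_bound} enforces.

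The first step is to transfer the dependence on $\hat{\beta}_{i+1}$ onto $\hat{\beta}_i$. The map $x\mapsto x/(r_i-x)$ is increasing on $[0,r_i)$, and $\hat{\beta}_{i+1}\le r_i\hat{\beta}_i<r_i$ because $\hat{\beta}_i\le\tfrac1{12}<1$ (Corollary~\ref{cor:iter_con_bound}); hence $\frac{\hat{\beta}_{i+1}}{r_i-\hat{\beta}_{i+1}}\le\frac{r_i\hat{\beta}_i}{r_i(1-\hat{\beta}_i)}=\frac{\hat{\beta}_i}{1-\hat{\beta}_i}$. Multiplying by $\frac{6}{\sqrt{i+2}}+1>0$ and subtracting $B$, the two ``$+1$'' terms cancel and we are left with $A-B\le\frac{\hat{\beta}_i}{1-\hat{\beta}_i}\left(\frac{6}{\sqrt{i+2}}-\frac{1}{\sqrt{i}}\right)$.

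The remaining step is a crude numeric estimate. Since $36i\ge i+2$ for all $i\ge 1$, the factor $\frac{6}{\sqrt{i+2}}-\frac{1}{\sqrt{i}}$ is nonnegative, so it may be replaced by $\frac{6}{\sqrt{i+2}}$; and because $x\mapsto x/(1-x)$ is increasing, $\hat{\beta}_i\le\tfrac1{12}$ gives $\frac{\hat{\beta}_i}{1-\hat{\beta}_i}\le\tfrac1{11}$. Therefore $A-B\le\frac{6}{11\sqrt{i+2}}$, and it only remains to check $\frac{6}{11\sqrt{i+2}}\le\frac{1}{\sqrt{i}}$, i.e. $36i\le 121(i+2)$, which holds for every $i\ge 1$. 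The points to be careful about are the directions of the two monotonicity arguments and the sign of $\frac{6}{\sqrt{i+2}}-\frac{1}{\sqrt{i}}$; once the growth bound $\hat{\beta}_{i+1}\le r_i\hat{\beta}_i$ is in hand, everything else is routine algebra.
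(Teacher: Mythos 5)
Your proof is correct, and while it rests on the same key mechanism as the paper's — the one-sided growth bound $\hat{\beta}_{i+1} \le \sqrt{(i+1)/i}\,\hat{\beta}_i$, which both arguments turn into
\begin{equation*}
\frac{\hat{\beta}_{i+1}}{\sqrt{\frac{i+1}{i}}-\hat{\beta}_{i+1}} \le \frac{\hat{\beta}_i}{1-\hat{\beta}_i}
\end{equation*}
(you via the monotone map $x \mapsto x/(r_i-x)$, the paper via the chain of equivalences in \Eqref{eq:inequality_2prime_sufficient_condition_1}) — the way you finish is genuinely cleaner. The paper splits the prefactor $\frac{6}{\sqrt{i+2}}+1$ into two pieces and proves two separate inequalities: the ``$+1$'' piece against $\frac{\hat{\beta}_i}{1-\hat{\beta}_i}$ using the bound above, and the $\frac{6}{\sqrt{i+2}}$ piece against $\frac{1}{(1-\hat{\beta}_i)\sqrt{i}}$ via a second estimate that hinges on the auxiliary numeric fact $\min_{j\ge 2}\frac{\sqrt{(j+1)(j+2)}}{j} - \tfrac{1}{2}\sqrt{\tfrac{j+2}{j}} \ge \tfrac{1}{2}$, stated without verification; it then adds the two and rearranges via the identity $\frac{\hat{\beta}_i}{1-\hat{\beta}_i}+\frac{1}{(1-\hat{\beta}_i)\sqrt{i}}=\frac{\hat{\beta}_i}{1-\hat{\beta}_i}\bigl(\tfrac{1}{\sqrt{i}}+1\bigr)+\tfrac{1}{\sqrt{i}}$. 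You instead apply the displayed bound to the full prefactor at once, observe that the ``$+1$'' terms then cancel exactly against those in $B$, and close with the elementary estimate $\frac{\hat{\beta}_i}{1-\hat{\beta}_i}\cdot\frac{6}{\sqrt{i+2}}\le\frac{6}{11\sqrt{i+2}}\le\frac{1}{\sqrt{i}}$. This sidesteps the auxiliary minimum and the rearrangement identity and replaces them with a single routine comparison; your opening observation that $A$ alone does not decay to zero, so the $B$ term must participate, is also the right heuristic for why the cancellation is needed. A modest but genuine simplification of the same underlying argument.
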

\begin{proof}
    According to the update paradigm of Algorithm 1, we have
    \begin{equation}
        \label{eq:inequality_2prime_sufficient_condition_1}
        \frac{\hat{\beta}_{i+1}}{\hat{\beta}_i}\le \sqrt{\frac{i+1}{i}} \Leftrightarrow \frac{1}{\hat{\beta}_i} \le \frac{\sqrt{\frac{i+1}{i}}}{\hat{\beta}_{i+1}}\Leftrightarrow \frac{1-\hat{\beta}_i}{\hat{\beta}_i}\le \frac{\sqrt{\frac{i+1}{i}}-\hat{\beta}_{i+1}}{\hat{\beta}_{i+1}}\Leftrightarrow \frac{\hat{\beta}_i}{1-\hat{\beta}_i}\ge \frac{\hat{\beta}_{i+1}}{\sqrt{\frac{i+1}{i}}-\hat{\beta}_{i+1}}.
    \end{equation}
    Besides, with the fact that 
    \begin{equation*}
        \begin{split}
            \min_{j\ge 2}\ \frac{\sqrt{(j+1)(j+2)}}{j} - \frac{1}{2}\cdot \sqrt{\frac{j+2}{j}} \ge \frac{1}{2}
        \end{split}
    \end{equation*}
    and $\hat{\beta}_i\le \frac{1}{12}$ for all $i\ge 2$, we then obtain
    \begin{equation}
        \label{eq:inequality_2prime_sufficient_condition_2}
        \begin{split}
            & 6\hat{\beta}_{i+1}\le \frac{1}{2}\le \left(\frac{\sqrt{(i+1)(i+2)}}{i} - \frac{1}{2}\cdot \sqrt{\frac{i+2}{i}}\right)\\
            = & \left(\sqrt{\frac{i+1}{i}}-\frac{1}{2}\right)\sqrt{\frac{i+2}{i}} \le \frac{\sqrt{\frac{i+1}{i}}-\hat{\beta}_{i+1}}{1-\hat{\beta}_i}\cdot\sqrt{\frac{i+2}{i}}.
        \end{split}
    \end{equation}
    We then rearrange the terms of~\Eqref{eq:inequality_2prime_sufficient_condition_2}, the following inequality establishes
    \begin{equation}
        \label{eq:inequality_2prime_sufficient_condition_2_re}
        \frac{1}{1-\hat{\beta}_{i}}\cdot \frac{1}{\sqrt{i}} \ge \frac{\hat{\beta}_{i+1}}{\sqrt{\frac{i+1}{i}}-\hat{\beta}_{i+1}}\cdot \frac{6}{\sqrt{i+2}}
    \end{equation}
    Combining~\Eqref{eq:inequality_2prime_sufficient_condition_1} with~\Eqref{eq:inequality_2prime_sufficient_condition_2_re}, we have
    \begin{equation}
        \label{eq:inequality_2prime_sum}
        \begin{split}
            & \frac{\hat{\beta}_i}{1-\hat{\beta}_i}+\frac{1}{1-\hat{\beta}_{i}}\cdot \frac{1}{\sqrt{i}} \ge \frac{\hat{\beta}_{i+1}}{\sqrt{\frac{i+1}{i}}-\hat{\beta}_{i+1}}\cdot \left(\frac{6}{\sqrt{i+2}}+1\right)\\
            \Longleftrightarrow & \frac{\hat{\beta}_i}{1-\hat{\beta}_i}\left(\frac{1}{\sqrt{i}}+1\right)+\frac{1}{\sqrt{i}} \ge \frac{\hat{\beta}_{i+1}}{\sqrt{\frac{i+1}{i}}-\hat{\beta}_{i+1}}\cdot \left(\frac{6}{\sqrt{i+2}}+1\right)\\
            \Longleftrightarrow & \frac{\hat{\beta}_{i+1}}{\sqrt{\frac{i+1}{i}}-\hat{\beta}_{i+1}}\cdot \left(\frac{6}{\sqrt{i+2}}+1\right) - \frac{\hat{\beta}_i}{1-\hat{\beta}_i}\left(\frac{1}{\sqrt{i}}+1\right) \le \frac{1}{\sqrt{i}}.
        \end{split}
    \end{equation}
    Thus, we complete the proof.
\end{proof}
\begin{corollary}
    \label{cor:T_2_3_no_negative_property}
    Under the hypotheses of Lemma~\ref{lem:T_2_3_telescoping_sum_property}, we have
    \begin{equation*}
        \frac{\hat{\beta}_{i+1}}{\sqrt{\frac{i+1}{i}}-\hat{\beta}_{i+1}}\cdot \left(\frac{6}{\sqrt{i+2}}+1\right) - \frac{\sqrt{\frac{1}{i}}+1}{1-\hat{\beta}_i} +1 \le 0
    \end{equation*}
\end{corollary}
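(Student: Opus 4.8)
The plan is to obtain this corollary immediately from Lemma~\ref{lem:T_2_1_telescoping_sum_property} together with a single elementary identity, so no new estimates on $\hat\beta_i$ are required. Lemma~\ref{lem:T_2_1_telescoping_sum_property} already supplies
\[
\frac{\hat{\beta}_{i+1}}{\sqrt{\frac{i+1}{i}}-\hat{\beta}_{i+1}}\cdot \left(\frac{6}{\sqrt{i+2}}+1\right) \;\le\; \frac{1}{\sqrt{i}} + \frac{\hat{\beta}_i}{1-\hat{\beta}_i}\left(\frac{1}{\sqrt{i}}+1\right),
\]
so it suffices to show that the right-hand side equals $\dfrac{\sqrt{\frac{1}{i}}+1}{1-\hat{\beta}_i}-1$; the claimed inequality then follows by substituting this value back and moving the $-1$ and the fraction to the left.

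The identity I would verify is: writing $a:=\sqrt{\frac{1}{i}}$ and $b:=\hat{\beta}_i$ (note $b\le\frac{1}{12}<1$ by Corollary~\ref{cor:iter_con_bound}, so $1-b>0$), one has
\[
a + \frac{b(a+1)}{1-b} \;=\; \frac{a(1-b)+b(a+1)}{1-b} \;=\; \frac{a+b}{1-b} \;=\; \frac{(a+1)-(1-b)}{1-b} \;=\; \frac{a+1}{1-b}-1 .
\]
Thus the bound of Lemma~\ref{lem:T_2_1_telescoping_sum_property} reads exactly $\frac{\hat{\beta}_{i+1}}{\sqrt{(i+1)/i}-\hat{\beta}_{i+1}}(\frac{6}{\sqrt{i+2}}+1)\le \frac{\sqrt{1/i}+1}{1-\hat\beta_i}-1$, which rearranges to
\[
\frac{\hat{\beta}_{i+1}}{\sqrt{\frac{i+1}{i}}-\hat{\beta}_{i+1}}\cdot \left(\frac{6}{\sqrt{i+2}}+1\right) - \frac{\sqrt{\frac{1}{i}}+1}{1-\hat{\beta}_i} +1 \;\le\; 0,
\]
as desired.

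There is essentially no obstacle: the whole content sits in Lemma~\ref{lem:T_2_1_telescoping_sum_property}, and this corollary is just the remark that the ``remainder'' $\frac{1}{\sqrt i}$ there is precisely the term that completes the identity above. The only points that deserve a line of care are that all denominators ($1-\hat\beta_i$ and $\sqrt{(i+1)/i}-\hat\beta_{i+1}$) are positive, which is guaranteed by $\hat\beta_t\le\frac{1}{12}$ from Corollary~\ref{cor:iter_con_bound}, and the admissible range of $i$: Lemma~\ref{lem:T_2_1_telescoping_sum_property} (through Lemma~\ref{lem:T_2_3_telescoping_sum_property}) is established for $i\ge 2$, so I would state the corollary for $i\ge 2$, which is all that is used when it is invoked in the proof of Theorem~\ref{thm:gradient_norm_expectation_bound}.
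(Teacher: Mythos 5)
Your argument is correct and matches the paper's own proof: both obtain the corollary by rearranging Lemma~\ref{lem:T_2_1_telescoping_sum_property} through the algebraic identity $\frac{1}{\sqrt{i}} + \frac{\hat{\beta}_i}{1-\hat{\beta}_i}\bigl(\frac{1}{\sqrt{i}}+1\bigr) = \frac{\sqrt{1/i}+1}{1-\hat{\beta}_i}-1$, which the paper phrases equivalently as $\frac{1}{\sqrt{i}} = (1-\hat{\beta}_i)\cdot\frac{\sqrt{1/i}+1}{1-\hat{\beta}_i}-1$ before moving the momentum term across. Your added remarks about sign of the denominators and the restriction $i\ge 2$ are accurate housekeeping but do not change the substance.
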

\begin{proof}
    According to Lemma~\ref{lem:T_2_1_telescoping_sum_property}, we have
    \begin{equation}
        \label{eq:telescoping_sum_property_results}
        \frac{\hat{\beta}_{i+1}}{\sqrt{\frac{i+1}{i}}-\hat{\beta}_{i+1}}\cdot \left(\frac{6}{\sqrt{i+2}}+1\right) - \frac{\hat{\beta}_i}{1-\hat{\beta}_i}\left(\frac{1}{\sqrt{i}}+1\right) \le \frac{1}{\sqrt{i}} = \left(1-\hat{\beta}_i\right)\cdot \frac{\sqrt{\frac{1}{i}}+1}{1-\hat{\beta}_i}-1.
    \end{equation}
    Thus, rearrange the previous inequalities, we obtain
    \begin{equation}
        \label{eq:T_2_3_no_negative_results}
        \begin{split}
            \frac{\hat{\beta}_{i+1}}{\sqrt{\frac{i+1}{i}}-\hat{\beta}_{i+1}}\cdot \left(\frac{6}{\sqrt{i+2}}+1\right) - \frac{\sqrt{\frac{1}{i}}+1}{1-\hat{\beta}_i} +1 \le 0
        \end{split}
    \end{equation}
    to complete the proof.
\end{proof}

\end{document}